\tikzset{curlybrace/.style={rounded corners=2pt,line cap=round}}%  
\def\cb@angle{#1},
\def\curlybrace{\pgfutil@ifnextchar[{\curly@brace}{\curly@brace[]}}%
\def\curly@brace[#1]#2#3#4{% 
\pgfkeys{/curlybrace/.cd,
tip angle = 0.75}% 
\pgfqkeys{/curlybrace}{#1}% 
\ifnum 1>#4 \def\cbrd{0.05} \else \def\cbrd{0.075} \fi
\draw[/curlybrace/.cd,curlybrace,#1]  (#2:#4-\cbrd) -- (#2:#4) arc (#2:{(#2+#3)/2-\cb@angle}:#4) --({(#2+#3)/2}:#4+\cbrd) coordinate (curlybracetipn);
\draw[/curlybrace/.cd,curlybrace,#1] ({(#2+#3)/2}:#4+\cbrd) -- ({(#2+#3)/2+\cb@angle}:#4) arc ({(#2+#3)/2+\cb@angle} :#3:#4) --(#3:#4-\cbrd);
}
\setlist[enumerate]{leftmargin=1.5em}
\setlist[itemize]{leftmargin=1.5em}
\providecommand{\MR}{\relax\ifhmode\unskip\space\fi MR }
\providecommand{\href}[2]{#2}
\definecolor{green}{rgb}{0,0.8,0} % Redefines the color green.
\newcommand{\rom}[1]{\uppercase\expandafter{\romannumeral #1\relax}}
\newtheorem{maintheorem}{Theorem}
\newtheorem{theorem}{Theorem}[section]
\newtheorem{corollary}[theorem]{Corollary}
\newtheorem{conjecture}[theorem]{Conjecture}
\newtheorem{lemma}[theorem]{Lemma}
\newtheorem{proposition}[theorem]{Proposition}
\theoremstyle{definition}
\newtheorem{definition}[theorem]{Definition}
\theoremstyle{remark}
\newtheorem{remark}[theorem]{Remark}
\numberwithin{equation}{section}
\newcommand{\nnrm}[1]{{\vert\kern-0.25ex\vert\kern-0.25ex\vert #1 
    \vert\kern-0.25ex\vert\kern-0.25ex\vert}}
\newcommand{\supp}{{\mathrm{supp}}\,}
\newcommand{\nb}{\nabla}
\newcommand{\dlt}{\delta}
\newcommand{\veps}{\varepsilon}
\newcommand{\tht}{\theta}
\newcommand{\otht}{\overline{\theta}}
\newcommand{\omg}{\omega}
\newcommand{\oomg}{\overline{\omg}}
\newcommand{\ou}{\overline{u}}
\newcommand{\Omg}{\Omega}
\newcommand{\bfr}{{\bf r}}
\newcommand{\bbN}{\mathbb N}
\newcommand{\bbR}{\mathbb R}
\newcommand{\bbT}{\mathbb T}
\newcommand{\bbZ}{\mathbb Z}
\begin{document}

%\title{Velocity  blow-up in $C^1\cap H^2$ for the 2D Euler equations}
\title{Cusp Formation in Vortex Patches}

\author{
  Tarek M. Elgindi\thanks{Department of Mathematics, Duke University. E-mail: tarek.elgindi@duke.edu}
 \and
 Min Jun Jo\thanks{Department of Mathematics, Duke University. E-mail: minjun.jo@duke.edu}}
\date{\today}

 \renewcommand{\thefootnote}{\fnsymbol{footnote}}
 \footnotetext{\noindent \emph{Key words: cusp formation, vortex patches, the Euler equations} \\
  \emph{2020 AMS Mathematics Subject Classification:} 35Q35, 76B47}

\renewcommand{\thefootnote}{\arabic{footnote}}

%\date{\today}%
%\dedicatory{}%
%\commby{}%
% ----------------------------------------------------------------

\maketitle

% ----------------------------------------------------------------

\begin{abstract}
We prove instantaneous cusp formation for any initial vortex patch with acute corners. This was conjectured to occur in the numerical literature \cite{Carrillo,Danchin2}.
\end{abstract}

\setcounter{tocdepth}{1}
\tableofcontents

% ----------------------------------------------------------------

\section{Introduction}
We consider the vorticity form of the incompressible Euler equations in $\bbR^2$,
\begin{equation}\label{eq_Euler}
    \begin{gathered}
    \partial_t \omg + u \cdot \nb \omg = 0, \\ u=\nb^{\perp}\Delta^{-1} \omg,
\end{gathered}
\end{equation}
supplemented with the initial data $\omega(0,x)=\omega_0(x)$. The transport nature of \eqref{eq_Euler} gives rise to the classical global well-posedness theory developed by Yudovich \cite{Yudo} in $L^1 \cap L^\infty.$ This provides a rigorous framework where we can work with a particular family of solutions to \eqref{eq_Euler} called \emph{vortex patches.} Vortex patches are solutions of the form
\begin{equation}\label{def_patch}
    \omega(t,x) = \mathbf{1}_{\Omega(t)}(x),
\end{equation}
where $ \mathbf{1}_{\Omega(t)}$ denotes the characteristic function of a bounded set $\Omega(t)\subset \bbR^{2}$. More generally, any region of constant vorticity can be called vortex patch. 
% \color{red} Log-lipschitz velocity? \color{black}

\vspace{0.1in}

\noindent $\bullet$ \textbf{Boundary regularity}

\vspace{0.05in}

 The exact form \eqref{def_patch} reduces our target of analysis from the bulk $\Omega(t)$ to its boundary $\partial\Omega(t)$. This motivates study of boundary regularity, asking if $\partial \Omega(t)$ remains smooth when $\partial \Omega(0)$ is given to be smooth. Based on the numerical simulations \cite{Dritschel,Zabusky} in terms of the contour dynamics of $\partial \Omega$, Majda \cite{Majda} conjectured that $\partial \Omega(t)$ could evolve from certain smooth initial curve $\partial \Omega(0)$ into a non-smooth curve with cusps in finite time. After some debate in the numerical literature \cite{Buttke, Dritschel2}, the conjecture was negatively settled by Chemin \cite{Chemin} showing that the boundary regularity in $C^\infty$ or $C^{k,\alpha}$ with $k\geq 1$ and $0<\alpha<1$ is preserved for all time. Simplified proofs were provided later by Bertozzi-Constantin \cite{Constantin} and Serfati \cite{Serfati}. The key point in all the proofs is that $C^{1,\alpha}$ regularity of $\partial \Omega(t)$ gives the control of $\|\nb u(t)\|_{L^\infty}$.

 % \color{red}{geometric lemma} \color{black}

Such propagation of the boundary smoothness over time can be generalized to the notion of the propagation of any topological/geometric information that the boundary initially possesses. For instance, if the initial curve $\partial \Omega(0)$ bears a curvature singularity at a point, we may ask whether such an initial singularity would be preserved, lost, or transition to a more severe singularity. The non-locality encoded in the Biot-Savart law $u=\nb^{\perp} \Delta^{-1}\omega $ makes the question challenging to answer. A partial answer was given by Danchin \cite{Danchin1}, where it is shown that the patch boundary remains smooth along the image of the smooth part of $\partial\Omega(0)$. But the smoothness obtained in \cite{Danchin1} carries the possibility of non-uniformity up to singular points, while a corner-like singularity could give us, in principle, uniform smoothness up to the singular point. This leads to investigation of patches that initially have a corner singularity. The question is whether the corner structure remains or develops into another type of singularity.

\vspace{0.1in}

\noindent $\bullet$ \textbf{$\bm{m}$-fold symmetry and odd symmetry.}

\vspace{0.05in}

 What makes the behavior near a singular point difficult to study is that the Lipschitz norm of the velocity can be unbounded at the singularity. One way is to impose $m$-fold symmetry with $m\geq 3$ on the initial vorticity configuration. In \cite{EJ}, it is shown that $m$-fold symmetry with $m\geq 3$ imposed around a corner-like singularity makes the velocity gradient stay bounded, which leads to the global well-posedness for vortex patches with corners under the $m$-fold symmetry in a properly chosen functional setting.

 Aside from $m$-fold symmetry, oddity of vorticity configuration also has been extensively explored since the pioneering work \cite{KS} by Kiselev and Šverák on the double exponential growth of the vorticity gradient. Oddity does simplify the analysis of the Biot-Savart kernel as well, though it is not enough to bound the velocity gradient unlike the $m$-fold symmetry with $m\geq 3$. Indeed, the simplification via oddity occurs in a way that one can single-out the logarithmically divergent part, which was frequently used as the primary mechanism for illposedness in the critical Sobolev spaces for the Euler equations \cite{EJ2,JeongKim,JY,JoKim} and the generalized SQG equations \cite{CJJ,JeongKim2}. A prototype of the odd configuration is the so-called Bahouri-Chemin solution $\omega$ in $\bbT^{2}=[-1,1]^2$ that satisfies $\omega=1$ on $[0,1]^2$ under the odd-odd symmetry, see
 \hyperref[BCsol]{Figure 1.}
 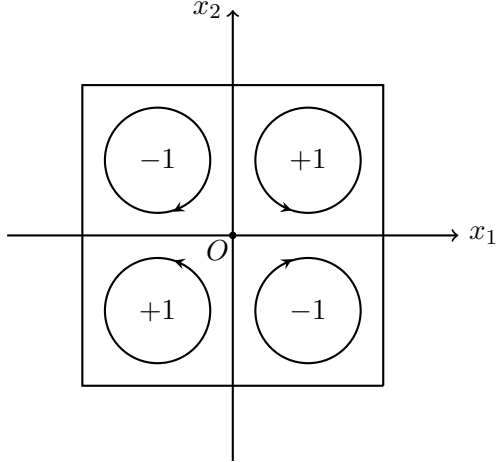
\begin{figure}[ht]
\begin{center}
\begin{tikzpicture}
    \coordinate (origo) at (0,0);
    \node (c) at (-0.2,-0.2) {$O$};
    \fill[black] (origo) circle (0.05);

    % draw axes
    \draw[thick,black,->] (-3,0) -- (origo) -- (3,0) node[black,right] {$x_1$};
    \draw[thick,black,->] (0,-3) -- (origo) -- (0,3) node (mary) [black,left] {$x_2$};

    % draw boundary
    \draw[thick,black,-] (2,-2) -- (2,2) -- (-2,2) -- (-2,-2) -- (2,-2);

    % sign
    \node (a) at (1,1) {$+1$};
    \node (b) at (-1,1) {$-1$};
    \node (c) at (-1,-1) {$+1$};
    \node (d) at (1,-1) {$-1$};
    % \node (a) at (2.2,-0.2) {$1$};
    % \node (a) at (-0.2,2.2) {$1$};
    % \node (a) at (-2.25,-0.2) {$-1$};
    % \node (a) at (-0.25,-2.2) {$-1$};

    \draw[thick,
        decoration={markings, mark=at position 0.7 with {\arrow[>=stealth]{>}}},
        postaction=decorate
        ]
        (1,1) circle (0.7);
            \draw[thick,
        decoration={markings, mark=at position 0.83 with {\arrow[>=stealth]{<
        }}},
        postaction=decorate
        ]
        (-1,1) circle (0.7);
            \draw[thick,
        decoration={markings, mark=at position 0.2 with {\arrow[>=stealth]{>}}},
        postaction=decorate
        ]
        (-1,-1) circle (0.7);
            \draw[thick,
        decoration={markings, mark=at position 0.33 with {\arrow[>=stealth]{<
        }}},
        postaction=decorate
        ]
        (1,-1) circle (0.7);
  \end{tikzpicture}
  \caption{Bahouri-Chemin solution on a periodic square}\label{BCsol}
  \end{center}
  \end{figure}
 One may notice that the Bahouri-Chemin solution is an equilibrium with its corners of $90^{\circ}$. Under the oddness condition, the special angles $0^{\circ}$ and $90^{\circ}$ like the ones found in the Bahouri-Chemin solution seem to be the only angles that propagate; in \cite{EJ}, it is also shown that if $\omega_0$ has its four acute corners meeting at the origin under the odd-odd symmetry, the angles at the corners immediately become $0^\circ$ (meaning cusp) or $90^{\circ}$. Whether it should be $0^\circ$ or $90^\circ$ can be determined by the specific initial configuration, for example, depending on the sign of $\omega_0$ on the first quadrant.
 
 % or the tangency of the corners to the $x_1$-axis.

 \vspace{0.1in}

\noindent $\bullet$ \textbf{Acute single corner}

\vspace{0.05in}

For single corners without any symmetry imposed, which correspond to the case $m=1$ with respect to the $m$-fold symmetry, there has been computational literature only \cite{Carrillo,Danchin2}. Based on their numerical simulations (see \hyperref[fig_cusp_CD]{Figure 2} and \hyperref[fig_tend_CD]{Figure 3} below), Cohen and Danchin \cite{Danchin2} conjectured that any single corner of acute angle would cusp immediately.

\begin{figure}[ht]\label{fig_cusp_CD}
    \centering
    \includegraphics[width=0.9\linewidth]{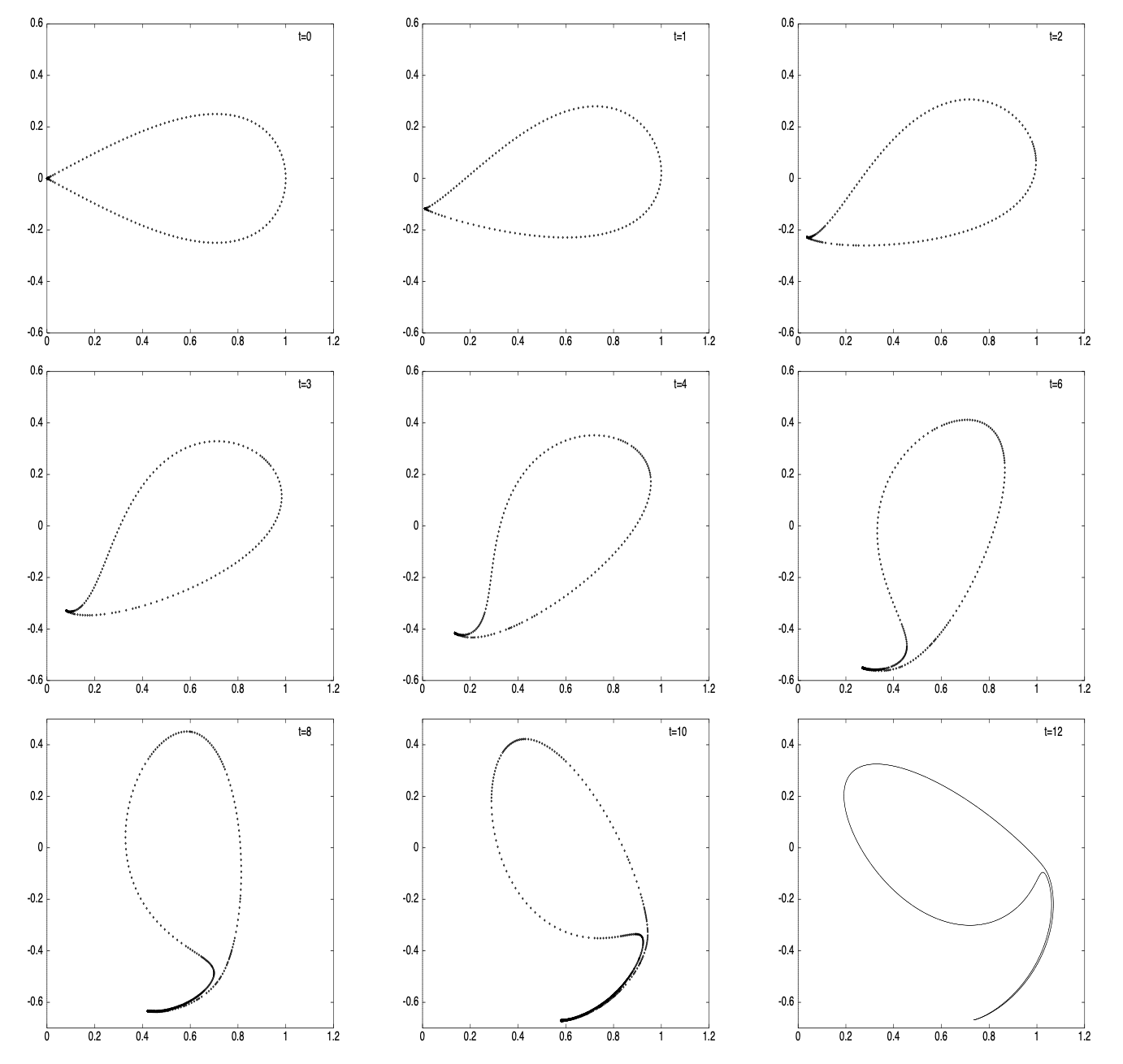}
    \caption{\cite{Danchin2}, p.497: Fig. 10, evolution of an initial acute-angle of a vortex patch.}
    
\end{figure}

\begin{figure}[ht]\label{fig_tend_CD}
    \centering
    \includegraphics[width=1\linewidth]{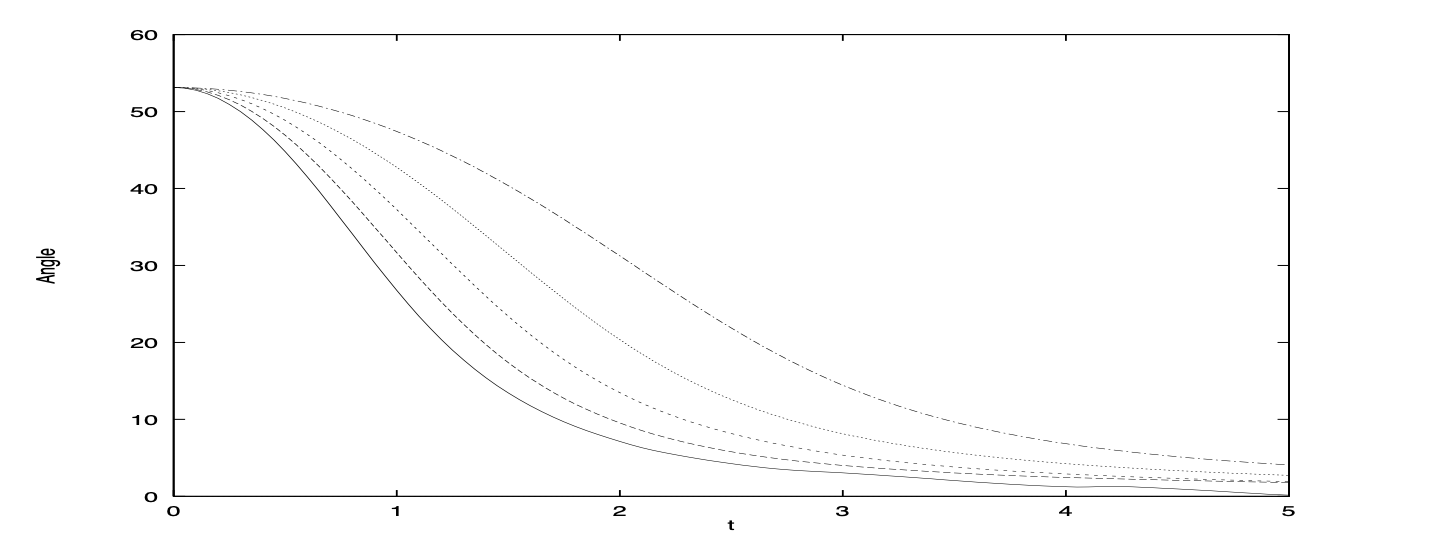}
    \caption{\cite{Danchin2}, p.494: Fig. 7(a), angle-in-time graph computed for different resolutions in the case of an initial patch with an acute corner. The resolution increases from top to bottom.}
   
\end{figure}
Moreover, from the proven fact \cite{Danchin3} cusps of order\footnote{The order of a cusp at $x=x_0$ is defined in \cite{Danchin2} by  $\sup\left\{\beta>0:\limsup_{x\to x_0}\frac{|(f_1-f_2)(x)|}{|x-x_0|^{\beta}}<\infty\right\}$, where $f_1$ and $f_2$ represent the graphs of the two sides of the cusping boundary near $x=x_0$. See Section 5.3 in \cite{Danchin2}. } one are preserved in time, the resulting order of the cusp evolving from an acute corner was conjectured in \cite{Danchin2} to be one with logarithmic sharpness. The full conjecture stated in \cite{Danchin2} is the following.
\begin{conjecture}[\cite{Danchin2}, p.496]\label{conj}
    Any acute single corner would immediately evolve into a cusp of order $1$ with logarithmic sharpness.  
\end{conjecture}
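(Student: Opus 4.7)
The plan is local: set up coordinates around the corner tip, separate the contribution to the velocity coming from a small neighborhood of the corner from that coming from the rest of the patch, and derive an effective evolution equation for the (half-)opening angle $\theta_{0}(t)$. As a first step I would rotate and translate so that at time $t$ the corner tip sits at the origin and its angle bisector points along the positive $x_{1}$-axis, and parametrize the two sides of the boundary locally as graphs $x_{2}=f_{\pm}(x_{1},t)$ with $f_{\pm}'(0,t)=\pm\tan\theta_{0}(t)$. By \cite{Danchin1}, the image under the flow of the smooth part of $\partial\Omega(0)$ stays in $C^{1,\alpha}$, so the velocity generated by the patch with a small cap around the corner removed is $C^{1,\alpha}$ near the origin and only contributes bounded translations and rotations of the moving frame. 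All of the cusp-forming action therefore comes from the local piece.

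The next step is to compute the leading-order velocity generated by a pure sector $S_{\theta_{0}}=\{0<r<\delta,\,|\theta|<\theta_{0}\}$. Expanding the stream function $\psi=\frac{1}{2\pi}\int_{S_{\theta_{0}}}\log|x-y|\,dy$ in polar coordinates $x=re^{i\theta}$ yields an expression of the form
\begin{equation*}
\psi(r,\theta)\;=\;r^{2}\bigl(A(\theta_{0},\theta)\log r+B(\theta_{0},\theta)\bigr)+O(r^{3}\log r),
\end{equation*}
so that $u=\nabla^{\perp}\psi$ is log-Lipschitz with leading behavior $r\log r\cdot V(\theta_{0},\theta)$. The crucial sign calculation is to verify that, when $2\theta_{0}<\pi/2$, the component of $V(\theta_{0},\pm\theta_{0})$ normal to the boundary ray at $\theta=\pm\theta_{0}$ points toward the bisector, so the sector is pinched. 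This is expected to convert into an effective estimate for $\theta_{0}(t)$ of the schematic form
\begin{equation*}
\dot\theta_{0}(t)\;\leq\;-\,c(\theta_{0})\,\log(1/\theta_{0})\,\theta_{0},\qquad c(\theta_{0})>0\text{ for acute }2\theta_{0},
\end{equation*}
together with bookkeeping estimates that control how the frame translates and rotates under the regular part. Integrating this ODE then forces $\theta_{0}(t)\to 0$ as $t\to 0^{+}$, which is the desired instantaneous cusping.

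The hard part is rigorously extracting and controlling this ODE in the absence of any symmetry. Unlike the $m$-fold or odd-odd settings of \cite{EJ}, there is no finite group action that freezes the corner tip or relates the two sides by reflection, so one must work in a time-dependent moving frame and track the two emanating arcs simultaneously, verifying that their independent motions are consistent with cusping and do not spuriously re-open the angle. Concretely I expect this to require: (i) a quantitative version of \cite{Danchin1} giving $C^{1,\alpha}$ control of the boundary on a shrinking arclength scale near the tip, uniform on a short time interval $[0,T]$; (ii) log-Lipschitz Lagrangian estimates, in the spirit of Yudovich theory, applied to the two boundary arcs emanating from the corner; and (iii) a closing argument showing that the leading-order $r\log r$ dynamics dominate all the correction terms produced by (i) and (ii).

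Finally, for the logarithmic sharpness claimed in Conjecture \ref{conj}, the $r\log r$ structure of the velocity naturally suggests an asymptotic self-similar profile in the rescaled variable $\xi=x_{1}\log(1/x_{1})$; I would expect to extract from the same analysis an asymptotic relation of the form $f_{+}(x_{1},t)-f_{-}(x_{1},t)\sim x_{1}/\log(1/x_{1})$ as $x_{1}\to 0^{+}$ for $t>0$ small, which matches the conjectured cusp of order $1$ with logarithmic sharpness. Making this last step precise amounts to a matched-asymptotic computation once the effective ODE for $\theta_{0}(t)$ has been rigorously justified, and it is plausible that the proof of instantaneous cusping can be carried out first in a robust way while the sharpness statement requires substantially more delicate control of the lower-order corrections.
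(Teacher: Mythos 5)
Your overall strategy (freeze a moving frame at the tip, isolate the logarithmically divergent part of the velocity generated by a model sector, and reduce to an effective angle dynamics) is in the right spirit, but there is a genuine gap at the heart of the argument: the effective ODE you write down cannot produce instantaneous cusping. If $\theta_{0}(t)$ is a single function of $t$ solving $\dot\theta_{0}\leq -c(\theta_{0})\theta_{0}\log(1/\theta_{0})$ with $\theta_{0}(0)>0$, then $\theta_{0}$ is continuous and strictly positive on a neighborhood of $t=0$ (integrating gives at best doubly exponential decay in $t$, never collapse as $t\to0^{+}$). The actual mechanism is that the $r\log r$ velocity makes the angular rotation rate at radius $r$ of size $|\ln r|$, so the ``angle of the patch at scale $r$'' is a function of the renormalized time $\tau=t|\ln r|$; for fixed small $t>0$ one has $\tau\to\infty$ as $r\to0$, and cusping means the renormalized angle $B(\tau)$ tends to $0$ as $\tau\to\infty$. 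This is why the paper derives an autonomous system in $\tau$ for the corner location $A$ and half-angle $B$, proves $B(\tau)\lesssim\tau^{-1-\delta}$, and only then translates back. Note also that in that system $B'(0)=0$ and $A'(0)=-\sin(4B_{0})/\pi\neq0$: at leading order an acute corner \emph{rotates} (differentially in $r$), and the closing of the angle is a second-order effect driven by the $(A')^{2}$ coupling, which is at odds with your claimed first-order ``pinching'' sign computation for the pure sector.

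The second gap is the closing argument you defer to item (iii). Comparing the true patch to the model sector, the natural Gronwall-type estimate for the (normalized) symmetric difference produces error growth like $e^{Ct|\ln r|}$, which is already $O(1)$ at times $t\sim1/|\ln r|$ — but to see the cusp one must control the error for $t\gg 1/|\ln r|$, i.e.\ $\tau\to\infty$. The paper's resolution is to measure the error by $F(t,r)=|B_{r}|^{-1}\int_{B_{r}}|\omega-\omega_{g}|$, derive a recursive integral inequality for $F$ along a contracting radial flow, and iterate it an $r$-dependent number of times so that the factorial denominators beat the powers of $t|\ln r|$. Without some substitute for this step, the leading-order dynamics cannot be shown to dominate on the relevant time scale. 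Relatedly, the $C^{1,\alpha}$ control from Danchin's theorem on the smooth part of the boundary is not uniform up to the singular point, so it cannot by itself confine the nonlocal contribution to ``bounded translations and rotations'' arbitrarily close to the tip; the paper instead uses the two-fold symmetric reduction (and a moving-frame version of it) together with the explicit velocity decomposition of Lemma~\ref{lem_key}.
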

However, as briefly mentioned in \cite{Danchin2}, such logarithmic sharpness near a cusp singularity is numerically difficult to capture in general. On the analytical side of cusp formation, Elgindi-Jeong \cite[Chapter 6]{EJ} derived an asymptotic model that could be effective near the origin over a short time scale, using a finer velocity decomposition in polar coordinates together with a new spatially-weighted time variable $\tau=t|\ln r|$ where $r$ denotes the distance from the origin. Recently, Jeon-Jeong \cite{JJ} showed that for the generalized SQG equations, which is \eqref{eq_Euler} with $u$ recovered by the general Biot-Savart law $u=\nb^{\perp}\Delta^{-1+\alpha}\omega$ for $0\leq \alpha<1$, any initially corner-like patch immediately ``bends" either upward or downward. Especially for the Euler case $\alpha=0$, they proved that any single acute corner would immediately bend downward.

  \vspace{0.1in}

  \noindent $\bullet$ \textbf{Goal of this paper}

  \vspace{0.05in}

  This paper demonstrates Conjecture~\ref{conj}. The proof is two-fold: we analyze the \emph{effective} nonlinear ODE system that is formally derived under an asymptotic regime, and then we design a perturbative argument to deduce cusp formation for the original Euler equations \eqref{eq_Euler}.

\newpage
\subsection{Main theorem}

To motivate the main theorem, let us make some definitions. 

\begin{definition}\label{def_corner}
A simply connected set $\Omega$ is said to have a corner at $x=0$ of angle $0<\alpha <\pi$ if there exists a sector $\mathcal{S}$ of opening $\alpha$ so that 
\[\lim_{r\rightarrow 0}\frac{|B_r(0)\cap (\Omega\Delta\mathcal{S})|}{|B_r|}=0.\] A simply  connected set $\Omega$ is said to have a cusp at $x=0$ if $0\in\partial\Omega,$ but 
\[\lim_{r\rightarrow 0}\frac{|B_r(0)\cap \Omega|}{|B_r|}=0.\]
\end{definition}
Our main theorem is that if $\Omega_0$ has an acute corner at $x=0$ at time $t=0,$ the corner evolves instantaneously into a cusp. 
\begin{maintheorem}[Instantaneous cusp formation] \label{thm_main_arbitrary}
Assume $\omega_0=\chi_{\Omega_0}$ for some simply connected set $\Omega_0\subset \mathbb{R}^2$. Assume that $\Omega_0$ has an acute corner at $x=0$. Then the corresponding unique vortex patch solution $\omega(\cdot,t)=\chi_{\Omega(t)}$ develops a cusp instantaneously at the image of $x=0$ under the flow. In particular, 
\[\lim_{r\rightarrow 0}\frac{|B_r(\Phi_t(0))\cap \Omega(t)|}{|B_r|}=0,\] where we are evaluating at $t=t(r)\rightarrow 0$ as $r\rightarrow 0.$ Here, $\Phi_t(0)$ denotes the particle trajectory starting at $0.$
\end{maintheorem}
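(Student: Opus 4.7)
The plan is to work in polar coordinates $(r,\theta)$ centered on the Lagrangian trajectory $\Phi_t(0)$ of the corner tip, parametrize the two sides of the patch boundary as $\theta = \theta_\pm(r,t)$, and derive an effective autonomous ODE for the opening angle $\alpha(r,t) := \theta_+(r,t) - \theta_-(r,t)$ in the spatially-weighted time variable $\tau = t\log(1/r)$ introduced in \cite{EJ}. The strategy has three pieces: derive the effective angular ODE, analyze it to conclude $\alpha \to 0$, and close a perturbative estimate that transfers the conclusion to the true Euler evolution.

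First, I would decompose the Biot--Savart velocity near the tip. Writing $u = \nabla^\perp \Delta^{-1}\chi_{\Omega(t)}$ and exploiting the near-wedge shape of $\Omega(t)$ in a shrinking disk around $\Phi_t(0)$, the stream function expands as $\psi(r,\theta,t) = r^2 \log r\,\Psi_1(\theta;\alpha) + r^2\, \Psi_2(\theta;\alpha) + \cdots,$ reflecting the characteristic $r^2\log r$ singularity of the Poisson kernel against a corner. The angular component of velocity then satisfies
\[
\frac{u_\theta(r,\theta,t)}{r} = \log(1/r)\, W(\theta;\alpha(r,t)) + O(1),
\]
where $W$ reduces, when $\Omega(t)$ is approximated by a pure wedge, to an explicit function depending only on the local opening angle. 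Taking $\dot\theta = u_\theta/r$ along each boundary arc $\theta_\pm(r,t)$, subtracting, and rescaling to $\tau = t\log(1/r)$ produces the effective equation
\[
\frac{d\alpha}{d\tau} = -G(\alpha) + \mathcal{E}(r,\tau),
\]
with $G$ depending only on the opening angle and $\mathcal{E}$ gathering both the $O(1/\log(1/r))$ errors from the Biot--Savart expansion and the deviation of $\Omega(t)$ from an exact wedge.

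Second, I would analyze the principal ODE $\alpha' = -G(\alpha)$. The key algebraic computation is the positivity $G(\alpha) > 0$ on the acute range $(0,\pi/2)$, with $G(0) = 0$; this is the analytic manifestation of the Jeon--Jeong downward-bending phenomenon and provides the engine driving the two sides toward each other. Consequently, every trajectory with $\alpha(0) \in (0,\pi/2)$ is strictly decreasing and converges to $0$ as $\tau \to \infty$, at a rate compatible with the conjectured order-one cusp with logarithmic sharpness.

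Third, I would execute the perturbative step. The core analytical obstacle is controlling $\mathcal{E}$ uniformly as the corner geometry degenerates: the velocity gradient is genuinely unbounded at the tip, so Chemin-type propagation of $C^{1,\gamma}$ is unavailable, and the $m$-fold symmetry trick of \cite{EJ} that bounded $\nabla u$ is not in force without symmetry. I would instead set up a bootstrap in a weighted functional class measuring how close each boundary arc $\theta_\pm(r,t)$ stays to the effective wedge in a scale-adapted sense, with weights degenerating toward the tip at exactly the rate needed to absorb the logarithmic loss in the Biot--Savart remainder. Closing this bootstrap over a $\tau$-window long enough to make $\alpha$ arbitrarily small is the main technical hurdle; a secondary point is the existence and uniqueness of the Lagrangian trajectory $\Phi_t(0)$, which follows from the log-Lipschitz estimate of Yudovich theory \cite{Yudo}. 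Once the bootstrap is in hand, given any sequence $r_n \to 0$ one selects $t_n \to 0$ with $t_n \log(1/r_n) \to \infty$ (for instance $t_n = 1/\log\log(1/r_n)$), applies the effective ODE analysis to conclude $\alpha(r_n, t_n) \to 0$, and observes that $\Omega(t_n) \cap B_{r_n}(\Phi_{t_n}(0))$ is trapped inside a wedge of opening $\alpha(r_n,t_n)$, so its density vanishes --- which is precisely the cusp criterion of Definition~\ref{def_corner}.
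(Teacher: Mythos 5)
Your high-level strategy — decompose Biot--Savart near the tip, pass to the renormalized time $\tau = t|\ln r|$, derive an effective angular system, analyze it, and close a perturbative argument — is in the right spirit, but two of the specific steps contain gaps that would need to be repaired before the argument closes.

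First, the effective dynamics are not a scalar autonomous ODE $\alpha' = -G(\alpha)$ with $G$ depending only on the opening angle. The Biot--Savart integrals that drive the angular velocity depend on the orientation of the wedge, not just its opening, so the effective system couples the half-angle $B = \alpha/2$ to the bisector angle $A$ (the corner rotates as it closes). Moreover the resulting integro-differential system has explicit $\tau$-dependence and genuine memory through the time integrals $\int_0^\tau \sin(2B)\sin(2A)\,d\tau'$, $\int_0^\tau \sin(2B)\cos(2A)\,d\tau'$; eliminating the memory produces a \emph{fourth}-order non-autonomous ODE in $(A,B)$, not a first-order scalar one. Establishing that $B \to 0$ then requires controlling the coupling — in the paper this is done via the identity $|A'| \approx B$ (using the intermediate quantity $Q = -\tau A'/B$, which can be solved exactly) and the monotonicity of the decay-order quantity $I = -\tau B'/B$; from $I(\tau) > 1$ eventually one gets the quantitative rate $B(\tau) \lesssim \tau^{-1-\delta}$. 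Your plan of checking positivity of a single $G$ misses this structure and would not give either the decay or, crucially, the \emph{rate} of decay, which is needed later.

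Second, the perturbative closure via a bootstrap on the boundary parametrizations $\theta_\pm(r,t)$ in weighted norms is almost certainly out of reach by the methods you describe, and is in fact listed in the paper as an open problem ("resolving the cusp"): near the tip the boundary may not remain a graph over $r$, and the unbounded velocity gradient makes pointwise boundary tracking delicate. The paper instead measures the perturbation by the averaged $L^1$ quantity $F(t,r) = \frac{1}{|B_r|}\int_{B_r}|\omega - \omega_g|$, which is always well-defined regardless of boundary regularity. The key point you do not identify is that $F$ satisfies a recursive integral inequality of the schematic form $\partial_t F \lesssim 1 + t|\ln r| + \int_r^1 F(t,s)\,\frac{ds}{s}$: a single pass gives only $F \lesssim t|\ln r|$, useless on the relevant time scale $t \gg 1/|\ln r|$, but iterating produces factorial gain (terms like $(Ct|\ln r|)^m/m!$), and choosing $m \sim \xi(r)$ with $\xi(r) \to \infty$ slowly makes $F$ vanish at time $t(r) = \xi(r)/|\ln r| \to 0$. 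Your proposal, as written, provides no mechanism to beat the naive $t|\ln r|$ bound, which is the central technical obstruction.
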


\begin{remark}
 See \hyperref[fig_initial]{Figure 4} and \hyperref[fig_cusp] {Figure 5} for illustrations of an initial patch and the corresponding cusp formation.\end{remark}

\subsubsection{Simplificiation via two-fold symmetry}

For simplicity, we prove instantaneous cusp formation under two-fold symmetry. More precisely, we assume that $\omega_0(r,\tht)=\omega_0(r,\pi+\tht)$ for any $\tht\in[0,2\pi]$ in polar coordinates. Indeed, it turns out that proving cusp formation under two-fold symmetry is enough to deduce the same conclusion for arbitrary single acute corners without any symmetry. One difference for minor modification is that the coordinates of the singular point move along the flow, which can be fixed by introducing the ``moving" frame as follows. Consider $\Phi_t(0)$ as the new origin where $\Phi_t(x)=(\Phi_t^1(x),\Phi_t^2(x))$ denotes the particle trajectory along the flow in time $t$ starting from $x$, i.e., the flow map associated with the velocity $u$. Then set up the moving polar coordinates as
\begin{equation*}
    \bfr = |x-\Phi_t(0)|, \quad \bm{\tht}=\arctan\left(\frac{x_2-\Phi_t^2(0)}{x_1-\Phi_t^1(0)}
    \right).
\end{equation*}
This enables us to mimic the perturbative argument in Section~\ref{sec_perturb} we use for the two-fold symmetry case in the usual polar coordinates.

Notice that by the translation invariance we can rewrite \eqref{eq_Euler} as
\begin{equation*}
    \partial_t \bm{\omega}(t,x) + (\bm{u}(t,x)-\bm{u}(t,0))\cdot \nb \bm{\omega}(t,x)=0
\end{equation*}
with $\omg(t,x)=\bm{\omg}(t,x-\Phi_t(0))$, meaning that the same approximate equation \eqref{eq_g_transport} will be derived via the velocity decomposition in Lemma~\ref{lem_key} for a single corner in the moving frame; see Remark~\ref{rmk_u}. The lack of the symmetry only changes the constants that appear during the further derivation, which does not affect the qualitative properties with respect to Section~\ref{sec_equivalence}-\ref{sec_decay}. Thus the same conclusions are drawn for the single corner case without any symmetry. %We refer to Section~\ref{sec_effective} for the full derivation of \eqref{eq_full}.

\subsubsection{Initial vorticity configuration}\label{sec_initial}
We consider the initial data of the form
\begin{equation*}
    \omega_0(x) = \mathbf{1}_{\Omega(0)}(x),
\end{equation*}
where the boundary of $\Omega(0)$ is smooth except at the origin $x=0$. We assume that $\Omega(0)$ is \textit{two-fold symmetric}, i.e., $\Omega(0)$ is invariant under rotation with angle $\pi$. If $\Omega(0)$ has an acute corner at $x=0$ (see Definition~\ref{def_corner}), there holds
\begin{equation}\label{eq_initial}
    \omega_0(r,\theta) = \mathbf{1}_{[A_0-B_0,A_0+B_0]}(\tht)+\mathbf{1}_{[\pi+A_0-B_0,\pi+A_0+B_0]}(\tht) + \Xi(r,\tht)
\end{equation}
for some numbers $A_0\in [0,2\pi]$, $B_0\in (0,\pi/4)$, and a function $\Xi:\bbR_{+}\times [0,2\pi]\to \{-1,0,1\}$ such that there exists a monotonic function $\kappa:\bbR_{+}\to\bbR_{+}$ satisfying
\begin{equation}\label{eq_initial_2}
\frac{1}{\pi r^2}\int_{0}^{r}s \int_{0}^{2\pi}|\Xi(s,\tht)|\,d\tht\,ds \leq \kappa(r) \quad \mbox{and} \quad \lim_{r\to 0^{+}}\kappa(r) = 0.
\end{equation}
For simplicity, we set $A_0=0$ and assume that $\omega_0$ is compactly supported in the region $0\leq r<1$.
See \hyperref[fig_initial]{Figure 4} for the simplest picture.

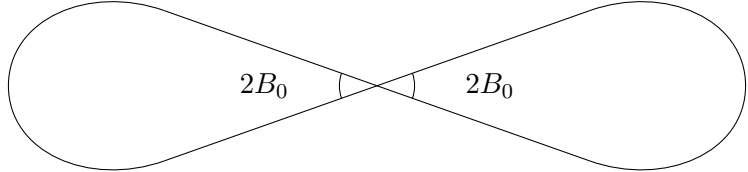
\begin{figure}[ht]
\begin{center}
\begin{tikzpicture}

\coordinate (origin) at (0,0);
\coordinate (mary) at ({sqrt(8)},1);
\coordinate (bob) at ({sqrt(8)},-1);
\coordinate (jason) at (-{sqrt(8)},1);
\coordinate (bell) at (-{sqrt(8)},-1);

    \draw[sharp corners=35pt]({sqrt(8)},-1)--(0,0)--({sqrt(8)},1);
    \draw[sharp corners=35pt](-{sqrt(8)},-1)--(0,0)--(-{sqrt(8)},1);
  \draw ({sqrt(8)},1) to[out angle=19.4712206, in angle=-19.4712206,
    curve through = { (4.9,0)}] ({sqrt(8)},-1);
   \pic [draw, -, "$2B_0$", angle eccentricity=3.0] {angle = bob--origin--mary};
   \draw (-{sqrt(8)},1) to[out angle=180-19.4712206, in angle=180+19.4712206,
    curve through = { (-4.9,0)}] (-{sqrt(8)},-1);
   \pic [draw, -, "$2B_0$", angle eccentricity=3.0] {angle = jason--origin--bell};
\end{tikzpicture}
\end{center}
\caption{A prototype of our initial patch $\Omega_0$ }\label{fig_initial}
\end{figure}

\subsubsection{Reformulation under two-fold symmetry}
% We call $J(t,r)$ the set of the angular support of $\omega(t)$ on $|x|=r$ defined by
% \begin{equation}\label{def_J}
% J(t,r)= \{\tht\in[0,2\pi]: \omega(t,r,\tht)=\mathbf{1}_{\Omega(t)}(r,\theta)=1 \}
% \end{equation}
% We denote by $\overline{\Tht}(t,r)$ the size of the angular support, where we set
% \begin{equation}\label{def_Tht}
%     \overline{\Tht}(t,r) = |J(t,r)|.
% \end{equation} 
% with $|J|$ denoting the measure of $J$. We also define the ``center'' angle $\Tht(t,r)$ by
% \begin{equation}\label{def_Tht_center}
% \Tht(t,r) = \frac{1}{|\int_{0}^{2\pi}\omg(t,r,\tht)\,d\tht|}\int_{0}^{2\pi}\tht\omg(t,r,\tht)\,d\tht  =  \frac{1}{|J(t,r)|}\int_{J(t,r)} \tht \, d\tht,
% \end{equation}
% which is just the angular center of mass over $J$.

We consider the average vorticity $G$ on balls centered at the origin:
\begin{equation*}
    G(t,r) = \frac{1}{|B_r|}\int_{B_r(0)}\omega(t,x)\,dx, \quad r>0.
\end{equation*}
 Notice that $G(t,r)$ coincides with $\frac{|B_r(0)\cap \Omega(t)|}{|B_r|}$ appearing in Definition~\ref{def_corner}. For the unique vortex patch $\Omega(t)$ evolving from $\Omega(0)$ that has a corner at the origin under two-fold symmetry, we establish the following theorem from which Theorem~\ref{thm_main_arbitrary} immediately follows.
\begin{theorem}\label{thm_main}
    Let $\omega(t,x)=\mathbf{1}_{\Omega(t)}(x)$ be the unique vortex patch solution to \eqref{eq_Euler} for the initial data $\omega_0(x)=\mathbf{1}_{\Omega(0)}(x)$ satisfying \eqref{eq_initial}-\eqref{eq_initial_2}. Then, there exists a continuous function $\eta:\bbR_{+}\to \bbR_{+}$ satisfying $$\lim_{r\to0^{+}}\eta(r)=0$$ such that there holds
    \begin{equation}\label{eq_thm}
      \lim_{r\to 0^{+}} G(\eta(r),r) = 0.
    \end{equation}
\end{theorem}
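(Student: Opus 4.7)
The strategy is to introduce an angular profile near the corner, derive an effective transport equation for it with quantifiable error, show that the effective dynamics drive the angular width to zero in a suitable rescaled time, and close the argument perturbatively against the true Euler flow.

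First, I would exploit two-fold symmetry to reduce the local description of $\mathbf{1}_{\Omega(t)}$ near $x=0$ to a single angular profile $g(t,r,\theta)$. Writing the transport equation in polar coordinates gives
\[
    \partial_t g + u_r \, \partial_r g + \frac{u_\theta}{r}\, \partial_\theta g = 0,
\]
and the velocity decomposition of Lemma~\ref{lem_key} splits $u_\theta/r$ into a leading ``effective'' part determined by the angular mass distribution (carrying a $|\ln r|$ weight) and a lower-order remainder. Dropping the remainder produces the approximate equation \eqref{eq_g_transport}; retaining it provides the error that must eventually be absorbed.

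Second, I would analyze the effective dynamics in the rescaled time $\tau = t\,|\ln r|$ suggested by \cite[Chapter 6]{EJ}, which at each fixed $r$ turns \eqref{eq_g_transport} into an autonomous system for the angular profile. On sector-like configurations of half-angle $B$, the effective rotation compresses the corner, and one obtains an ODE of the form $\dot B = -\Psi(B)$ with $\Psi>0$ on $(0,\pi/4]$, so $B(\tau)$ decreases monotonically to $0$. Section~\ref{sec_equivalence} would then show that sector-like configurations are \emph{preserved} by the effective flow, up to an error controlled by $\kappa(r)$ from the initial perturbation $\Xi$, and Section~\ref{sec_decay} would quantify the decay $B(\tau)\to 0$. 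Via the representation of $G(t,r)$ as a weighted radial integral of the angular mean of $g$, this translates into the conclusion that, for the effective flow, $G_{\mathrm{eff}}(\eta(r),r)\to 0$ along any curve $\eta(r)\to 0$ satisfying $\eta(r)|\ln r|\to\infty$.

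Third, the perturbative step of Section~\ref{sec_perturb} would transfer the conclusion from the effective system to the genuine Euler evolution. The velocity error discarded above is of relative size $|\ln r|^{-1}$, plus a contribution of size $\kappa(r)$ from $\Xi$; a Gr\"onwall-type estimate in a norm measuring deviation of $g$ from its sector template allows this error to accumulate only over a time window shrinking with $r$. Choosing $\eta(r)$ at the intersection of this perturbative window with the decay condition $\eta(r)|\ln r|\to\infty$ (for instance $\eta(r)\sim |\ln r|^{-\alpha}$ with $\alpha\in(0,1)$ suitably tuned) yields $\lim_{r\to 0^{+}}G(\eta(r),r)=0$, which is \eqref{eq_thm}.

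The main obstacle I expect is the perturbative closure: while two-fold symmetry cancels the most singular dipole contribution to $u_\theta/r$ near the origin and makes the leading effective velocity extractable, the remaining error still couples different radial scales through the Biot--Savart kernel and can, in principle, smear out the sharp corner before the rescaled time $\tau$ grows large enough to exploit the effective decay. Propagating a uniform-in-$r$ equivalence between the true and effective patches, using the scale-sensitive structure of Lemma~\ref{lem_key} and the smallness of $\kappa(r)$ in \eqref{eq_initial_2}, is therefore the technical heart of the argument.
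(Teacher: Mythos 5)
Your outline follows the same architecture as the paper (effective angular system in the rescaled time $\tau=t|\ln r|$, decay of the half-angle $B$, then a perturbative comparison with the true Euler flow), but the step you identify as the technical heart --- the perturbative closure --- is exactly where your proposal fails quantitatively. A ``Gr\"onwall-type estimate'' on the deviation $F(t,r)=\frac{1}{|B_r|}\int_{B_r}|\omega-\omega_g|$ produces, from the recursive inequality of Proposition~\ref{prop_F}, a first bound of order $Ct|\ln r|+F(0,r)$. Since the effective angle $B(\tau)$ only tends to zero as $\tau\to\infty$ (it decays like $\tau^{-1-\delta}$, not in finite $\tau$; see Theorem~\ref{thm_B}), cusp formation is only visible for $t\gg 1/|\ln r|$, i.e.\ precisely where $Ct|\ln r|$ is no longer small. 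The paper's resolution is not a single Gr\"onwall pass but an iteration of the recursive inequality an $r$-dependent number of times $m\approx\xi(r)$, which yields a bound of the form $\frac{C}{|\ln r|}e^{Ct|\ln r|}+\frac{(Ct|\ln r|)^m}{m!}$; the factorial gain from the repeated $\int_0^t$ and $\int_r^1\frac{ds}{s}$ integrations is the idea your proposal is missing.

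This also shows your proposed time scale $\eta(r)\sim|\ln r|^{-\alpha}$, $\alpha\in(0,1)$, cannot work: it forces $\tau=\eta(r)|\ln r|\sim|\ln r|^{1-\alpha}$, so the Gr\"onwall factor $e^{C\tau}=e^{C|\ln r|^{1-\alpha}}$ overwhelms every available smallness (the prefactors are only $O(1/|\ln r|)$ and $O(\kappa)$). The admissible window is much narrower: one must take $\eta(r)|\ln r|=\xi(r)$ with $\xi(r)\to\infty$ only logarithmically slowly (the paper takes $\xi(r)\approx\frac12\min\{\ln|\ln r|,\ln(1/\kappa)\}$, hence $\eta(r)\approx\ln|\ln r|/|\ln r|$), which is why the final rate in $G$ is only a double logarithm. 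Two further points are glossed over: the effective dynamics are not a scalar autonomous ODE $\dot B=-\Psi(B)$ but the coupled second-order system \eqref{eq_full} for $(A,B)$, and proving $B\to0$ requires the monotonicity of the decay-order quantity $I(\tau)=-\tau B'/B$ together with the comparability $|A'|\approx B$, not just positivity of a damping term; and the derivation of the inequality for $F$ at patch regularity requires a smooth approximation argument exploiting the rectifiability of $\partial\Omega_g$, which a formal Gr\"onwall in a ``norm measuring deviation of $g$'' does not supply.
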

\begin{remark}
    See \hyperref[fig_cusp]{Figure 5} for a picture of cusp formation.
\end{remark}
     \begin{figure}[ht]
\begin{center}
\begin{tikzpicture}

\coordinate (origin) at (0,0);
\fill[black] (origin) circle (0.05);
\node (a) at (-0.1,-0.3) {$O$};
  \draw (0,0) to[out angle=-45, in angle=-45,
    curve through = { (2.2,-1) (3,1) (4,1.5) (-44.9:1) }] (0,0);

\draw (0,0) to[out angle=135, in angle=135,
    curve through = { (-2.2,1) (-3,-1) (-4,-1.5) (180-44.9:1) }] (0,0);
    \draw[->,>=stealth',thick] (150:1.2cm) (20:1.5) arc[radius=1.5, start angle=20, end angle=-35];
        \draw[->,>=stealth',thick] (150:1.2cm) (200:1.5) arc[radius=1.5, start angle=200, end angle=145];
    \node (a) at (1.3,0.9) {Angle jump};
    % \node (b) at (1.3,1.4) {potential};
\end{tikzpicture}
\end{center}
\caption{Instantaneous cusping near $r=0$ at $t\to 0^{+}$}\label{fig_cusp}
\end{figure}
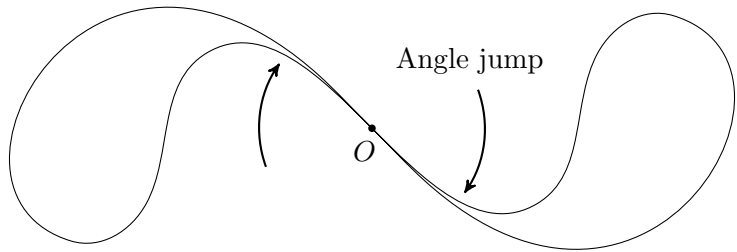

\begin{remark}
   For any $\kappa=\kappa(r)$ given in \eqref{eq_initial_2}, we establish the bound
    \begin{equation*}
        G(\eta(r),r) \lesssim \max\left\{\frac{1}{ \left(\ln |\ln r|\right)^{1+\delta}},\kappa^{1/2}\left(|\ln r|^{-1/2}\right)\right\}.
    \end{equation*}
    for some $\delta>0$. This implies the order-$1$ cusp with logarithmic sharpness, verifying Conjecture~\ref{conj} in the case that the initial corner is regular. Note that the reason for the double log is that the time scale is quite short. Indeed, the function $\eta$ is only slightly larger than $\frac{1}{|\ln r|}$, i.e.,  $\lim_{r\to 0}\left(\eta(r)|\ln r|\right) = \infty$. Given that the correct time scale for the problem is $\frac{\tau}{|\ln r|}$ (see \eqref{scale}), this should be seen as a long-time result with respect to $\tau$. It would be desirable to improve this to an all-time result.
\end{remark}

\subsection{Some related works}
We refer to \cite{EJ} as the primary source of  
the related references. For the reader's convenience, we briefly record a limited partial list of the previous works that could be relevant to this paper.

\begin{itemize}
\item General textbooks: Vortex dynamics has been a research target of huge interest as the simplified model of the fluid dynamics. See \cite{Lamb,Saffman} and references therein.

    \item Rotating patches: Under the $m$-fold symmetry with $m\geq 3$, it is known \cite{EJ} that the singular patches with their corners meeting at the origin rotate with some constant angular speed near the origin. Our work is about the cases $m=1$ and $m=2$. One may look at \cite{Burbea,DEJ, GPSY1,HMW,HM3,Park} and references therein.

    \item Critical phenomena for the 2D Euler equations: Largeness of velocity gradient is the key feature of instantaneous cusp formation, which often involves the ``corner-like" structure of the initial vorticity configuration that is not a patch but a smoothed-out version of patch. We refer to \cite{BL1,EJ2,EM,JeongKim,JY,JoKim,KL} and references therein.

    \item SQG patches: The $\alpha$-SQG equation generalizes \eqref{eq_Euler} in terms of the Biot-Savart law $u=\nb^{\perp}\Delta^{-1+\alpha}$ for which $\alpha=0$ corresponds to \eqref{eq_Euler} while $\alpha=1/2$ gives the classical SQG(surface quasi-geostrophic) equation. Due to its transport nature, the patch solutions naturally arise. See \cite{CCG1,JJ, KL2,KL3,KRYZ} and references therein.

\end{itemize}

\subsection{Heuristics of proof}
Let us now discuss some of the main ideas in the proof. The main ideas are essentially three. The first is that we derive and analyze an ODE system that describes the asymptotic behavior of a patch with a regular corner. This system is derived by keeping only the unbounded term of the velocity gradient, as was done previously in the work \cite{EJ}, and passing to a good coordinate system that reduces the asymptotic patch dynamics to an autonomous, nonlinear, fourth order ODE system. The ODE system models the evolution of the size and location of the corner. We completely characterize the long-time behavior of solutions to the ODE system by establishing various monotonicity properties of solutions. In particular, the angle of corner always closes in infinite (renormalized) time. Our analysis thus shows that the "leading order model" predicts cusp formation from a single acute corner. To lift the result from the ODE system, we devise a perturbative argument. Decomposing the solution $\omega$ as:
\[\omega=\omega_g+\bar\omega,\] where $\omega_g$ is given by the solution to the ODE system and $\bar\omega$ is simply $\omega-\omega_g$, we attempt to establish quantitative smallness of $\bar\omega$. Since we are working with patches, we first introduce the right quantity to measure $\bar\omega$:
\[F(t,r)=\frac{1}{|B_r|}\int_{B_r} |\bar\omega|.\] This quantity precisely measures the normalized area of the symmetric difference between the actual patch and the approximate one given by the ODE system.

It turns out that $F$ can be shown to satisfy an ODE of the type:
\[\partial_t F\leq C(1+t |\ln r|)+C\int_r^1\frac{F(s,t)}{s}ds,\]
\[\lim_{r\rightarrow 0}F(0,r)=0.\]
in the weak sense. Carrying this out requires a bit of work since we are working at relatively low regularity; however, the fact that the boundary of the approximate patch $\omega_g$ is rectifiable for all time allows us to close an approximation argument. After deriving the ODE satisfied by $F,$ we move to prove that it remains small for long (renormalized) time. Now, a-priori, we have that $|F|\leq 1.$ Plugging this into the ODE gives a bound of the type:
\begin{equation}\label{Firstbound}F\leq C t |\ln r|+F(0,r).\end{equation} Now, the time scale for $\omega_g$ is $\tau=t|\ln r|.$ Therefore, to see cusp formation, we have to prove smallness of $F$ for times 
\[t\gg\frac{1}{\ln r}.\] On such a long time scale, the bound \eqref{Firstbound} becomes useless. However, it turns out that the bound can be improved upon iteration. Indeed, every integration in time and integration with respect to $\frac{ds}{s}$ introduces another factor of $t|\ln r|$ and \emph{also an ever-improving constant}. In particular, after an $r$-dependent number of iterations, we are able to show that \[\lim_{r\rightarrow 0} F(t(r),r)=0,\] for some $t(r)=\frac{\xi(r)}{|\ln r|}$ with $\xi(r)\rightarrow\infty$ as $r\to 0$.

%Let $(A,B)$ be the solution pair to the effective ODE in \eqref{eq_full} and assume that we know $A(\tau)\to A_{\infty}\in [0,2\pi]$  and $B(\tau)\to 0$ as $\tau \to \infty.$ Then we may simplify \eqref{eq_full} near $\tau =\infty$ as
%\begin{equation*}
%    B''(\tau) = -\frac{B'(\tau)}{\tau} + \frac{(B'\tau))^2}{B(\tau)},
%\end{equation*}
%which admits its unique solution $B(\tau) = \tau^{-\frac{\tau_0 B'(\tau_0)}{B(\tau_0)}}$

\subsection{Outline of the paper}
Section~\ref{sec_effective} is devoted to the analysis of the effective system \eqref{eq_full}; we give a formal derivation of \eqref{eq_full} and prove that the effective patch angle $B$ decays to zero in the asymptotic regime. In Section~\ref{sec_perturb}, we design a perturbative argument based on the average of the perturbed vorticity near the singular point; the average quantity satisfies a recursive relation which eventually implies decay. We finish the proof of Theorem~\ref{thm_main} in Section~\ref{sec_main} by combining the results of the aforementioned sections.

\section{The Effective System}\label{sec_effective}

We start by introducing the ODE system in $\tau \in \bbR_{+}$ derived in \cite{EJ}:

\begin{equation}\label{eq_full}
    \begin{split}
        B''&=-\frac{B'}{\tau}+2\cot{(2B)}(B')^2-2\tan{(2B)}(A')^2, \\
        A''&= - \frac{A'}{\tau}-\frac{\sin{(4B)}}{\pi \tau}+2\cot{(2B)}A'B'-2\tan{(2B)}A'B',
    \end{split}
\end{equation}
supplemented by the initial data $0<B(0)=B_0 <\frac{\pi}{4},$ $B'(0)=0$, $A(0)=0,$ and $A'(0)=-\frac{\sin(4B_0)}{\pi}$. Here, $A$ represents the angle between the $x$-axis and the ray that bisects the vortex patch from the origin, and $B$ represents the half angle of the patch's corner. Such interpretation of $A$ and $B$ will be shown to be effective near $r=0$. The condition $B_0<\frac{\pi}{4}$ means that we treat patches whose initial angle is acute. Notice that $A(0)=0$ is chosen for simplicity, see Section~\ref{sec_initial}. Since $\tau$ will be interpreted as the spatially weighted time variable
\begin{equation}\label{scale}
    \tau = t|\ln r|
\end{equation}
for \eqref{eq_Euler} in the later sections, one may notice that investigating the asymptotic regime of $\tau\to \infty$ potentially corresponds to the regime $r\to 0^{+}$ with $ \frac{1}{|\ln r|} \ll t=t(r) \ll 1 $.

\subsection{Formal derivation of the effective system \texorpdfstring{\eqref{eq_full}}{Lg}}\label{sec_derivation}

We formally derive \eqref{eq_full} from the Euler equations \eqref{eq_Euler} in view of the patch solutions. One can find this derivation in Chapter 6 of \cite{EJ}; we repeat the same process for the reader's convenience. One may first recall the key lemma that was proved and used in \cite{EJ} as below.
\begin{lemma}[\cite{EJ}, Chapter 5]\label{lem_key}
    Assume that $\omega\in L_{c}^{\infty}(\bbR^2)$ and $\supp\omega$ is under the two-fold symmetry. Then, in polar coordinates, the velocity $u=\nb^{\perp}\Delta^{-1}\omega$ satisfies the estimate
    \begin{equation*}
        \left|u(r,\theta)-\frac{1}{2\pi}\begin{pmatrix}
            \cos\tht \\ -\sin\tht
        \end{pmatrix}rI^{s}(r) + \frac{1}{2\pi}\begin{pmatrix}
            \sin\tht \\ \cos\tht
        \end{pmatrix}rI^{c}(r)  \right|\leq Cr\|\omg\|_{L^\infty}
    \end{equation*}
    with some absolute constant $C>0$ independent on the size of the support of $\omega$, where
    \begin{equation}\label{def_int}
        I^{s}(r):= \int_{r}^{\infty}\int_{0}^{2\pi}\frac{\sin(2\tht)}{s}\omg(t,s,\tht)\,d\tht\,ds \quad \mbox{and}\quad I^{c}(r):=\int_{r}^{\infty}\int_{0}^{2\pi}\frac{\cos(2\tht)}{s}\omg(t,s,\tht)\,d\tht\,ds.
    \end{equation}
\end{lemma}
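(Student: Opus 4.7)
The plan is to write the Biot--Savart law in complex form and carry out a multipole expansion, using the two-fold symmetry of $\omega$ to kill the monopole contribution and identify the dipole contribution with the claimed leading part. Setting $z=x_1+ix_2=re^{i\theta}$ and $w=y_1+iy_2=se^{i\phi}$ and using that $(x-y)^{\perp}=i(x-y)$ in complex notation, one rewrites
\[
u_1(x)+iu_2(x)=\frac{i}{2\pi}\int_{\bbR^2}\frac{\omega(w)}{\bar{z}-\bar{w}}\,dA(w),
\]
and splits the integral at $|w|=r$. On the interior region $|w|<r$, the pointwise bound $\int_{|w|<r}|z-w|^{-1}\,dA\leq 4\pi r$ already yields $O(r\|\omega\|_{L^{\infty}})$ and contributes only to the error.

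On the exterior $|w|\geq r$ I would expand
\[
\frac{1}{\bar{z}-\bar{w}}=-\sum_{k=0}^{\infty}\frac{\bar{z}^{k}}{\bar{w}^{k+1}},
\]
which converges absolutely for each such $w$. The $k=0$ term reduces in polar coordinates to $\int_{r}^{\infty}\int_{0}^{2\pi}e^{i\phi}\omega(s,\phi)\,d\phi\,ds$, and the two-fold symmetry $\omega(s,\phi+\pi)=\omega(s,\phi)$ renders the $\phi$-integrand $\pi$-antiperiodic, so it vanishes identically. A direct polar computation of the $k=1$ term, combined with $\bar{z}=re^{-i\theta}$ and the definitions in \eqref{def_int}, produces $-\frac{ir}{2\pi}e^{-i\theta}\bigl(I^{c}(r)+iI^{s}(r)\bigr)$; taking real and imaginary parts reproduces exactly the leading-order combination of $\sin\theta,\cos\theta,I^{s}(r),I^{c}(r)$ stated in the lemma.

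It remains to estimate the tail $\sum_{k\geq 2}$, which sums geometrically to $-\bar{z}^{2}/[\bar{w}^{2}(\bar{w}-\bar{z})]$, of modulus $r^{2}/(s^{2}|z-w|)$. The main technical point is that $|z-w|$ can be as small as zero on the annulus $r\leq|w|\leq 2r$, so the geometric bound alone is not enough there. I would resolve this by splitting once more at $|w|=2r$: outside this annulus $|z-w|\geq s/2$ and the integral reduces to $\int_{2r}^{\infty}s^{-2}\,ds=O(r^{-1})$; inside the annulus the bound $s\geq r$ gives $r^{2}/(s^{2}|z-w|)\leq |z-w|^{-1}$, and the standard estimate $\int_{|z-w|\leq 3r}|z-w|^{-1}\,dA=O(r)$ closes the bound. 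Both regions contribute $O(r\|\omega\|_{L^{\infty}})$, and compactness of $\supp\omega$ ensures all integrals at infinity converge. The rest is bookkeeping.
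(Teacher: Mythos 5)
Your proof is correct and is essentially the same velocity decomposition as in Elgindi--Jeong (and Kiselev--\v{S}ver\'ak before them): split the Biot--Savart integral at $|y|=r$, observe that the near region contributes $O(r\|\omega\|_{L^\infty})$, Taylor-expand the kernel in the far region, kill the zeroth-order term by two-fold symmetry, read off $I^s,I^c$ from the first-order term, and bound the higher-order tail by a further split near $|y|\approx r$. The only cosmetic difference is that you package the kernel expansion as a complex Laurent series in $\bar z/\bar w$ rather than as a real Taylor expansion of $(x-y)^\perp/|x-y|^2$ in $x$, which is a clean way to organize the same computation.
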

\begin{remark}\label{rmk_u}
    The only difference between the above lemma and the original one of \cite{EJ} (see also \cite{KS}) is that we use the fact $u(0,0)=0$ under the two-fold symmetry.
\end{remark}
\noindent The above lemma manifests that
\begin{equation*}
    u(r,\tht)= \frac{1}{2\pi}\begin{pmatrix}
            \cos\tht \\ -\sin\tht
        \end{pmatrix}rI^{s}(r) - \frac{1}{2\pi}\begin{pmatrix}
            \sin\tht \\ \cos\tht
        \end{pmatrix}rI^{c}(r)  + O(r).
\end{equation*}
Since we are interested in the regime $r\to 0^{+}$, the remainder $O(r)$ term is considered relatively small that we may drop it. This leads to the quasi-modified Euler equation (cf. \eqref{eq_modified})
\begin{equation*}
    \partial_t \omg + \left(\frac{1}{2\pi}\begin{pmatrix}
            \cos\tht \\ -\sin\tht
        \end{pmatrix}rI^{s}(r) - \frac{1}{2\pi}\begin{pmatrix}
            \sin\tht \\ \cos\tht
        \end{pmatrix}rI^{c}(r)\right)\cdot \nb \omg = 0.
\end{equation*}
Inspired by the fact that $I^{s}(r)$ and $I^{c}(r)$ are the integrals with respect to the measure $\frac{ds}{s}$, we write the vorticity as
\begin{equation*}
    \omega(t,r,\tht) = g(t|\ln r|,\tht) + \mbox{small remainder}
\end{equation*}
where the smallness of the remainder is assumed in the asymptotic regime $r\to 0^{+}$; we may drop the remainder term. Rewriting $I^{s}(r)$ and $I^{c}(r)$ as
\begin{equation*}
    \begin{split}
        I^{s}(r)&= \frac{1}{t}\int_{0}^{t|\ln r|}\int_{0}^{2\pi} \sin(2\tht')g(t|\ln r|,\tht')d\tht' d\left(t|\ln r|\right), \\
        I^{c}(r)&= \frac{1}{t}\int_{0}^{t|\ln r|}\int_{0}^{2\pi} \cos(2\tht')g(t|\ln r|,\tht')d\tht' d\left(t|\ln r|\right),
    \end{split}
\end{equation*}
we naturally introduce the new variable $\tau=t|\ln r|$ and arrive at
\begin{equation}\label{eq_mango}
\begin{split}
|\ln r|\partial_\tau g +\frac{r}{2\pi t} \bigg[ \begin{pmatrix}
            \cos\tht \\ -\sin\tht
        \end{pmatrix} J^{s}(\tau)  - \begin{pmatrix}
            \sin\tht \\ \cos\tht
        \end{pmatrix} J^{c}(\tau)\bigg] \cdot \bigg[ \frac{\partial_\tht g}{r}\begin{pmatrix}
            -\sin\tht \\ \cos\tht
        \end{pmatrix} - \frac{t\partial_\tau g}{r} \begin{pmatrix}
            \cos\tht \\ \sin\tht
        \end{pmatrix} \bigg] = 0
\end{split}
\end{equation}
where $J^{s}(\tau)$ and $J^{c}(\tau)$ are defined by
\begin{equation}\label{def_J_int}
\begin{split}
J^{s}(\tau) = \int_{0}^{\tau}\int_{0}^{2\pi}\sin(2\tht')g(\tau',\tht')\,d\tht'\,d\tau',  \quad
J^{c}(\tau)= \int_{0}^{\tau}\int_{0}^{2\pi}\cos(2\tht')g(\tau',\tht')\,d\tht'\,d\tau'.
\end{split}
\end{equation}
Dividing \eqref{eq_mango} by $|\ln r|$, we obtain
\begin{equation}\label{donut}
\partial_\tau g -\frac{1}{2\pi\tau}\bigg[\sin(2\tht)J^{s}(\tau)+\cos(2\tht)J^{c}(\tau)\bigg]\partial_\tht g = \frac{t}{2\pi\tau}\bigg[\cos(2\tht)J^{s}(\tau)-\sin(2\tht)J^{c}(\tau)\bigg]\partial_\tau g
\end{equation}
Since we are dealing with the ``instantaneous” phenomena, we assume $0<t \ll 1$ which formally justifies the negligibility of the right-hand side of \eqref{donut}. This gives us the following \emph{angular transport equation} for $g=g(\tau,\tht)$ as
\begin{equation}\label{eq_g_transport}
\partial_\tau g - \frac{1}{2\pi\tau} \bigg[ \sin(2\tht) J^{s}(\tau) + \cos(2\tht) J^{c}(\tau)  \bigg] \partial_\tht g=0.
\end{equation}
We now consider the initial data of the form
\begin{equation}
g(0,\tht) = \mathbf{1}_{[A_0-B_0,A_0+B_0]}(\tht) + \mathbf{1}_{[\pi+A_0-B_0,\pi+A_0+B_0]}(\tht)
\end{equation}
which coincides with \eqref{eq_initial} via the relationship $\omg_0(0,r,\tht) = g(0,\tht)$. This represents the initial corner (under the two-fold symmetry) whose angle is $2B_0$, centered at $A_0$. As we discussed in Section~\ref{sec_initial}, we impose $A_0=0$ without loss of generality. Due to the transport nature of \eqref{eq_g_transport}, the solution has the form
\begin{equation}\label{g_form_2}
g(\tau,\tht) = \mathbf{1}_{[(A-B)(\tau),(A+B)(\tau)]}(\tht) +  \mathbf{1}_{[\pi+(A-B)(\tau),\pi+(A+B)(\tau)]}(\tht).
\end{equation}
Evaluating \eqref{eq_g_transport} at $A+B$ and $A-B$ based on \eqref{g_form_2}, we obtain the coupled system
\begin{equation}\label{eq_ode_1}
\begin{split}
\tau B'(\tau) &= -\frac{1}{\pi}\bigg(\sin(2B)\cos(2A)\int_{0}^{\tau}\sin(2B)\sin(2A)\,d\tau' -\sin(2B)\sin(2A)\int_{0}^{\tau}\sin(2B)\cos(2A)\,d\tau'\bigg),\\
\tau A'(\tau) &= -\frac{1}{\pi}\bigg(\cos(2B)\sin(2A)\int_{0}^{\tau}\sin(2B)\sin(2A)\,d\tau'+\cos(2B)\cos(2A)\int_{0}^{\tau}\sin(2B)\cos(2A)\,d\tau'\bigg).
\end{split}
\end{equation}
Notice that the above integral system can be turned into a fourth-order ODE system as follows. We differentiate both sides of \eqref{eq_ode_1} and in the result of such differentiation, we replace some integral terms with $A'$ and $B'$ by using \eqref{eq_ode_1} again. This gives us
\begin{equation}\label{eq_second_order}
\tau   \begin{pmatrix} B'' \\ A'' \end{pmatrix} = - \begin{pmatrix} B'\\A' \end{pmatrix} - \frac{1}{\pi} \begin{pmatrix} 0 \\ \cos(2B)\sin(2B) \end{pmatrix} + \tau M' M  \begin{pmatrix} B' \\ A' \end{pmatrix}
\end{equation}
where $M$ is a matrix whose explicit form is
\begin{equation}\label{def_M}
M =  \begin{pmatrix} \sin(2B)\cos(2A) & -\sin(2B)\sin(2A) \\ \cos(2B)\sin(2A) & \cos(2B)\cos(2A)  \end{pmatrix}.
\end{equation}
From \eqref{def_M}, we know that
\begin{equation}
\begin{split}
   M' &= 2B'\begin{pmatrix} \cos(2B)\cos(2A) & -\cos(2B)\sin(2A) \\ -\sin(2B)\sin(2A) & -\sin(2B)\cos(2A)  \end{pmatrix}+ 2A'\begin{pmatrix} -\sin(2B)\sin(2A) & -\sin(2B)\cos(2A) \\ \cos(2B)\cos(2A) & -\cos(2B)\sin(2A)  \end{pmatrix} 
\end{split}
\end{equation}
and
\begin{equation}
M^{-1} = \frac{1}{\sin(2B)\cos(2B)} \begin{pmatrix} \cos(2B)\cos(2A) & \sin(2B)\sin(2A) \\ -\cos(2B)\sin(2A) & \sin(2B)\cos(2A)  \end{pmatrix}.
\end{equation}
Then we finally establish the desired ODE system \eqref{eq_full} for $A$ and $B$ from \eqref{eq_second_order}, which indeed will be proved to be \emph{effective} in the regime of $r\to 0^{+}$ with $0<t\ll 1$ via the perturbation argument presented in Section~\ref{sec_perturb}.

\subsection{Local well-posedness of \texorpdfstring{\eqref{eq_full}}{Lg}}
Since \eqref{eq_full} is a singular ODE, it may not be immediately obvious why local solutions exist starting from $\tau=0.$ The local existence is easily established using the contraction mapping theorem because of the specific choice of initial data. Indeed,
writing $g(\tau)=B'(\tau)$ and $f(\tau)=A'(\tau)+\frac{\sin(4B_0)}{\pi}$, the system \eqref{eq_full} reads
\begin{equation*}
 \begin{pmatrix}
     g \\ f
 \end{pmatrix}=
 \begin{pmatrix}
     \mathfrak{F_1}(g,f) \\ \mathfrak{F_2}(g,f)
 \end{pmatrix},
\end{equation*}
where the mapping $\mathfrak{F}:(g,f)\mapsto (\mathfrak{F}_1(g,f),\mathfrak{F}_2(g,f))$ is defined by
% \begin{equation*}
% \begin{split}
%      \mathfrak{F_1}(g,f)(\tau) &=  \frac{1}{\tau}\int_{0}^{\tau}2\tau' \left(\cot\left(2G(\tau')+2B_0\right)g(\tau')^2 - \tan\left(2G(\tau')+2B_0\right)\left(f(\tau')-c_1(B_0)\right)\right)^2\,d\tau', \\
%      \mathfrak{F_2}(g,f)(\tau) &=  \frac{1}{\tau}\int_{0}^{\tau}2\tau'\left(f(\tau')-c_1(B_0)\right)g(\tau') \left(\cot\left(2G(\tau')+2B_0\right)- \tan\left(2G(\tau')+2B_0\right)\right)\,d\tau' \\
%      &\qquad - \frac{1}{\tau}\int_{0}^{\tau}\frac{2}{\pi}\cos\left(2G(\tau)+4B_0\right)\sin(2G(\tau'))\,d\tau',
% \end{split}
% \end{equation*}
\begin{equation*}
\begin{split}
     \mathfrak{F_1}(g,f)(\tau) &=  \frac{1}{\tau}\int_{0}^{\tau}2\tau' \left(\cot\left(2G(\tau')+2B_0\right)g(\tau')^2 - \tan\left(2G(\tau')+2B_0\right)\left(f(\tau')-c_1(B_0)\right)\right)^2\,d\tau', \\
     \mathfrak{F_2}(g,f)(\tau) &=  \frac{1}{\tau}\int_{0}^{\tau}2\tau'\left(f(\tau')-c_1(B_0)\right)g(\tau') \left(\cot\left(2G(\tau')+2B_0\right)- \tan\left(2G(\tau')+2B_0\right)\right)\,d\tau' \\
     &\qquad - \frac{1}{\tau}\int_{0}^{\tau}\frac{1}{\pi}(\sin(4G(\tau')+4B_0)- \sin(4B_0))\,d\tau',
\end{split}
\end{equation*}
with the notations $ G(\tau')= \int_{0}^{\tau'}g(s)\,ds$ and  $ c_1(B_0)=  \frac{\sin(4B_0)}{\pi}.$ Then one can find sufficiently small $\veps>0$ depending on $B_0$ only such that the following two properties hold.
\begin{itemize}
    \item The mapping $\mathfrak{F}$ sends $X$ to $X$ where $$X=\left\{(g,f)\in C([0,\veps];\bbR)^2: \sup_{\tau\in[0,\veps]}|(g,f)(\tau)| \leq C(B_0)\veps \right\}.$$ Indeed, there exists a constant $C=C(B_0)>0$ such that
    \begin{equation*}
        \sup_{\tau\in[0,\veps]}|\mathfrak{F}(g,f)(\tau)| \leq C(B_0)\veps
    \end{equation*}
    for any $(g,f)\in X$.
    \item The mapping $\mathfrak{F}$ is a contraction. One can find a constant $C'=C'(B_0)>0$ such that
    \begin{equation*}
        \sup_{\tau\in[0,\veps]}|\mathfrak{F}(g_1,f_1)(\tau) - \mathfrak{F}(g_2,f_2)(\tau)| \leq C'(B_0) \veps \sup_{\tau\in[0,\veps]}|(g_1,f_1)(\tau)-(g_2,f_2)(\tau)| 
    \end{equation*}
   holds for any $(g_1,f_1)\in X$ and $(g_2,f_2)\in X$. Choosing $\veps < \frac{1}{2C'(B_0)}$, the mapping $\mathfrak{F}$ becomes a contraction on $X$.
\end{itemize}
By the contraction mapping principle, for any given $B_0\in (0,\pi/4)$, there exists a unique fixed point in $X$ for $\mathfrak{F}$ with sufficiently small $\veps>0$. This establishes the local well-posedness of \eqref{eq_full}. It is easy to deduce that $A$ and $B$ are smooth up to $\tau=0$, though we only use that $A,B\in C^2([0,\veps])$.

\subsection{Asymptotic equivalence of \texorpdfstring{$|A'|$}{Lg} and \texorpdfstring{$B$}{Lg} }\label{sec_equivalence}
In this subsection, we derive one specific property of the solution pair $(A,B)$ to the effective system \eqref{eq_full}: in the asymptotic regime $\tau\to \infty$, the quantities $|A'|$ and $B$ are comparable. We introduce a simple monotonicity lemma for $B$ and use the lemma to demonstrate the asymptotic equivalence between $|A'|$ and $B$. We then use this to deduce the global existence of solutions along with the fact that $B$ cannot become zero in finite time. 
\begin{lemma}\label{lem_B_0}
   So long as the solution exists, we have that non-increasing in $\tau$ and $0\leq B(\tau)< B_0$.
\end{lemma}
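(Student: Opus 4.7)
The strategy is to bypass the second-order form \eqref{eq_full} and work instead with the first-order integral system \eqref{eq_ode_1}, which exposes a useful algebraic cancellation. Introduce the auxiliary quantities
\[
P(\tau) := \cos(2A)\, J^s(\tau) - \sin(2A)\, J^c(\tau), \qquad Q(\tau) := \sin(2A)\, J^s(\tau) + \cos(2A)\, J^c(\tau),
\]
so that \eqref{eq_ode_1} becomes $\pi\tau B' = -\sin(2B)\, P$ and $\pi\tau A' = -\cos(2B)\, Q$. The goal reduces to showing $P(\tau) \geq 0$ on the interval of existence.

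Differentiating $P$ directly, the contributions involving $(J^s)' = \sin(2B)\sin(2A)$ and $(J^c)' = \sin(2B)\cos(2A)$ cancel by the trivial identity $\cos(2A)\sin(2A) - \sin(2A)\cos(2A) = 0$, and the remaining terms collapse to $P' = -2A' Q$. Substituting $Q = -\pi\tau A'/\cos(2B)$ from the second equation of \eqref{eq_ode_1} yields the manifestly non-negative identity
\[
P'(\tau) = \frac{2\pi \tau\, (A'(\tau))^2}{\cos(2B(\tau))},
\]
valid as long as $B < \pi/4$. Since $J^s(0) = J^c(0) = 0$, one has $P(0) = 0$, hence $P \geq 0$ throughout. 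This gives $\tau B' = -\sin(2B)\, P/\pi \leq 0$ and therefore $B' \leq 0$. Because $B(0) = B_0 < \pi/4$, monotonicity then forces $B(\tau) \in [0, B_0] \subset [0, \pi/4)$, so the running assumption $B < \pi/4$ closes by bootstrap.

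For the strict inequality $B(\tau) < B_0$ when $\tau > 0$, I would evaluate the first equation of \eqref{eq_full} in the limit $\tau \to 0^+$, using $B'(0)=0$ and $\lim_{\tau \to 0^+} B'(\tau)/\tau = B''(0)$ to obtain $2B''(0) = -2\tan(2B_0)(A'(0))^2 < 0$, since $A'(0) \neq 0$ for $B_0 \in (0,\pi/4)$. Hence $B$ strictly decreases on a right neighborhood of $0$, and the monotonicity $B' \leq 0$ propagates this strictness to all $\tau > 0$.

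The main obstacle would be attempting a direct argument from \eqref{eq_full}: the natural reformulation $(\tau B')' = 2\tau\cot(2B)(B')^2 - 2\tau\tan(2B)(A')^2$ has indefinite sign, and a contradiction-based barrier argument at a first critical point of $B'$ stalls on the possibility that $A'$ and $B'$ vanish simultaneously. The payoff of returning to $(P,Q)$ is that one differentiation is undone, and the cross-cancellation reveals the sign directly, without any need to separately control $A$ or $A'$.
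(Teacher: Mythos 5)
Your proof is correct, but it takes a genuinely different route from the paper's. The paper argues directly on the second-order system \eqref{eq_full}: $B'(0)=0$ and $B''(0)<0$ give $B'<0$ initially, and at any first return of $B'$ to zero the equation forces $B''=-2\tan(2B)(A')^2\le 0$, which serves as the barrier. You correctly identify the soft spot of that barrier --- the degenerate possibility that $A'$ and $B'$ vanish simultaneously --- and you sidestep it by going back to the first-order integral system \eqref{eq_ode_1} and exhibiting the monotone quantity $P=\cos(2A)J^s-\sin(2A)J^c$, whose derivative collapses (after the cross-cancellation and substitution of the $A'$-equation) to $2\pi\tau(A')^2/\cos(2B)\ge 0$ with $P(0)=0$; the sign of $B'$ then falls out with no case analysis and no information about $A'$ needed. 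In the paper the degeneracy is implicitly resolved only afterwards, in Proposition~\ref{prop_A'}, whose explicit solution of the $Q$-equation yields $A'=-\sin(4B)/\pi\neq 0$ while $0<B<\pi/4$; your approach makes the lemma self-contained. Two small remarks: your auxiliary $Q$ is close to, but not the same as, the $Q$ defined in \eqref{def_Q}, so it should be renamed to avoid a clash; and your argument presupposes that the local solution of \eqref{eq_full} constructed by the contraction mapping actually satisfies the integral system \eqref{eq_ode_1} (this is immediate from the choice of initial data, but is worth one sentence). Your limiting computation $B''(0)=-\tan(2B_0)(A'(0))^2<0$ and the resulting strictness $B(\tau)<B_0$ for $\tau>0$ match what the paper asserts, and the bootstrap keeping $B<\pi/4$ is sound.
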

\begin{proof}
Observe that $B'=0$ implies $B''<0$ by the system \eqref{eq_full} so long as $B>0$. Since $B''(0)<0$ guarantees $B'<0$ near the initial time, a continuity argument says that $B'\leq 0$ so long as $B> 0$. %Suppose that $B=0$ and $B'<0$ at some $\tau>0$. This gives $B''=+\infty$, preventing $B$ from being negative.  
\end{proof}

% We use the taylor expansions for the trigonometric functions. For example,
% \begin{equation*}
%     \sin{x}=x+\frac{x^3}{3}+O(x^5), \,\, \cos{x}=1-\frac{x^2}{2}+O(x^4), \,\, \tan{x}=x + \frac{1}{3}x^3 + O(x^5), \,\, \cot{x}=\frac{1}{x}-\frac{x}{3}+O(x^3)
% \end{equation*}

\begin{proposition}\label{prop_A'}
    $|A'(\tau)|\approx B(\tau)$ so long as the solution exists.
\end{proposition}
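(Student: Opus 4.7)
The plan is to uncover an exact conservation law coupling $A'$ and $B$, after which the equivalence will be immediate. Using the trigonometric identity $\cot(2B)-\tan(2B)=2\cot(4B)$, the second line of \eqref{eq_full} rewrites as
\[
A'' + \frac{A'}{\tau} + \frac{\sin(4B)}{\pi\tau} \;=\; 4A'B'\cot(4B).
\]
Multiplying by $\tau$, using $(\tau A')' = \tau A''+A'$, and using $4B'\cot(4B) = (\ln\sin(4B))'$ (valid since Lemma~\ref{lem_B_0} gives $B(\tau)\in(0,B_0]\subset(0,\pi/4)$, hence $\sin(4B)>0$), the equation becomes the clean relation
\[
\Big(\tfrac{\tau A'}{\sin(4B)}\Big)' \;=\; -\frac{1}{\pi}.
\]
Integrating from $0$ and using $\lim_{\tau\to 0^+}\tau A'(\tau)/\sin(4B(\tau))=0$ (which follows from the $C^2$ regularity established in the local well-posedness subsection together with $B(0)=B_0>0$), I obtain the exact identity
\[
A'(\tau) \;=\; -\frac{\sin(4B(\tau))}{\pi}\qquad \text{for all } \tau\geq 0
\]
that is consistent with the initial data $A'(0)=-\sin(4B_0)/\pi$. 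A quick sanity check is to plug $A'=-\sin(4B)/\pi$ back into \eqref{eq_full} and verify directly; this is the route I would actually take in the write-up, as it avoids any delicate justification of limits at $\tau=0$.

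Once the conservation law is in hand, the equivalence $|A'(\tau)|\approx B(\tau)$ reduces to an elementary estimate on $\sin(4B)/B$. On $(0,\pi)$ the function $x\mapsto \sin(x)/x$ is strictly decreasing, since $(\sin x/x)'=(x\cos x-\sin x)/x^2$ and $x\cos x-\sin x$ vanishes at $0$ with derivative $-x\sin x<0$. Because $B(\tau)\in(0,B_0]$ and $4B_0\in(0,\pi)$, this gives
\[
\frac{\sin(4B_0)}{4B_0} \;\leq\; \frac{\sin(4B(\tau))}{4B(\tau)} \;\leq\; 1.
\]
Substituting into $A'=-\sin(4B)/\pi$ yields
\[
\frac{\sin(4B_0)}{\pi B_0}\,B(\tau) \;\leq\; |A'(\tau)| \;\leq\; \frac{4}{\pi}\,B(\tau),
\]
with constants depending only on $B_0$, which is the claim. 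I do not anticipate a real obstacle: the only nontrivial step is spotting that $1/\sin(4B)$ is the right integrating factor, and the remaining manipulations are routine calculus.
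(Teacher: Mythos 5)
Your proof is correct and is essentially the paper's argument in a cosmetically different parametrization: you integrate $\tau A'/\sin(4B)$ directly, while the paper works with the intermediate quantity $Q=-\tau A'/B$ and solves the resulting linear ODE, but both routes yield the same exact identity $A'(\tau)=-\sin(4B(\tau))/\pi$ and conclude with the same elementary comparison of $\sin(4B)/B$ against its values at $B=B_0$ and $B\to 0^+$.
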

\begin{proof}

To clarify the relation between $A'$ and $B$, we introduce the \textit{intermediate} quantity
\begin{equation}\label{def_Q}
    Q(\tau)=-\frac{\tau A'(\tau)}{B(\tau)}.
\end{equation}
Differentiating $Q$ in $\tau$ and using \eqref{eq_full}, we obtain
\begin{equation*}
\begin{split}
    Q'=-\frac{A'B+\tau A''B-\tau A'B'}{B^2} 
    % &=Q\left(2B'\left(\cot{(2B)}-\frac{1}{2B}-\tan(2B)\right)\right)+\frac{\sin(4B)}{\pi B} \\
    % &=Q\bigg(\log\left(\sin(2B)\right)-\log(2B)+\log(\cos(2B))\bigg)' +\frac{\sin(4B)}{\pi B} \\
    =Q\left(\log \left( \frac {\sin (4B)}{4B}\right)\right)' + \frac{\sin(4B)}{\pi B}.
\end{split}
\end{equation*}
% From the solution formula $Q(\tau)= \int_0^{\tau} e^{\int_{\tau'}^{\tau}f(s)\,ds}g(\tau')\,d\tau',$
Since $Q(0)=0$, we explicitly compute $Q$ as
\begin{equation}\label{eq_sol_Q}
    Q(\tau)=\frac{\tau \sin(4B(\tau))}{\pi B(\tau)}.
\end{equation}
Due to the monotonicity of $B$ from Lemma~\ref{lem_B_0}, there holds the trivial bound
\begin{equation}\label{B_0}
    \frac{\sin(4B_0)}{\pi B_0}\leq \frac{\sin(4B(\tau))}{\pi B(\tau)}\leq \frac{4}{\pi}
\end{equation}
for any $\tau>0$. Then \eqref{eq_sol_Q} and \eqref{B_0} imply that
\begin{equation*}
    Q(\tau)\approx \tau
\end{equation*}
uniformly in $\tau>0$. By the definition \eqref{def_Q}, we conclude that $|A'(\tau)|\approx B(\tau)$ as desired.
\end{proof}

\subsection{Global existence, the decay of \texorpdfstring{$B$}{Lg}, and the convergence of \texorpdfstring{$A$}{Lg}}\label{sec_decay}
% Inspired by the fact that the quantity $\frac{\tau_0 g'(\tau_0)}{g(\tau_0)}$ for any fixed $\tau_0>0$ precisely determines the order of the decay of the solution to the leading order ODE
% \begin{equation*}
%     g''(\tau)=-\frac{g'(\tau)}{\tau}+\frac{(g'(\tau))^2}{g(\tau)},
% \end{equation*}
% i.e., the solution $g$ has its explicit form as
% \begin{equation*}
%     g(\tau)= C(\tau_0)\tau^{\frac{\tau_0 g'(\tau_0)}{g(\tau_0)}} \quad \mbox{with} \quad C(\tau_0)=\frac{g(\tau_0)}{\tau_0^{\tau_0/g(\tau_0)}},
% \end{equation*}
We define the positive ``decay order" quantity $I$ by
\begin{equation}\label{def_decay_order}
    I(\tau)=-\frac{\tau B'(\tau)}{B(\tau)}.
\end{equation}
\begin{lemma}\label{lem_I_monotone}
   So long as the solution exists, $I(\tau)$ is non-decreasing in $\tau>0$.
\end{lemma}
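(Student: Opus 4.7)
The plan is to compute $I'(\tau)$ directly from the definition $I(\tau)=-\tau B'/B$ and show the result is non-negative. The crucial input from Section~\ref{sec_equivalence} is that $A'$ admits an explicit closed form in terms of $B$: combining the definition $Q=-\tau A'/B$ in \eqref{def_Q} with the formula \eqref{eq_sol_Q}, we read off
\[A'(\tau)=-\frac{\sin(4B(\tau))}{\pi}.\]
This collapses the effective system to an autonomous scalar ODE for $B$ alone. Substituting $(A')^2=\sin^2(4B)/\pi^2$ into the first equation of \eqref{eq_full} and simplifying $\tan(2B)\sin^2(4B)=4\sin^3(2B)\cos(2B)$, I would rewrite the $B$-equation as
\[B''=-\frac{B'}{\tau}+2\cot(2B)(B')^2-\frac{8\sin^3(2B)\cos(2B)}{\pi^2}.\]

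Next I would differentiate $I$ directly,
\[I'(\tau)=-\frac{B'}{B}-\frac{\tau B''}{B}+\frac{\tau (B')^2}{B^2},\]
and substitute the expression for $\tau B''$ coming from the reduced equation. The two $B'/B$ contributions cancel exactly, and the remaining terms organize themselves as
\[I'(\tau)=\frac{\tau (B')^2}{B}\left(\frac{1}{B}-2\cot(2B)\right)+\frac{8\tau \sin^3(2B)\cos(2B)}{\pi^2\,B}.\]

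Finally, I would check both terms are non-negative on the range $B\in(0,\pi/4)$ allowed by Lemma~\ref{lem_B_0}. The second term is manifestly positive there. For the first, writing $h(B)=\sin(2B)-2B\cos(2B)$ gives $\frac{1}{B}-2\cot(2B)=\frac{h(B)}{B\sin(2B)}$; since $h(0)=0$ and $h'(B)=4B\sin(2B)\ge 0$ on $(0,\pi/2)$, we have $h\ge 0$, which supplies the needed sign. This yields $I'\ge 0$, proving monotonicity.

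I do not anticipate a real obstacle beyond careful algebra. The structural reason the proof works is that the closed form for $A'$ turns the \emph{a priori} dangerous cross-term $-2\tan(2B)(A')^2$ in the $B$-equation into a sign-definite forcing, which then combines with the elementary convexity-type bound $\frac{1}{B}>2\cot(2B)$ on $(0,\pi/4)$ to make $I'$ a sum of two manifestly non-negative pieces.
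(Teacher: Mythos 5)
Your proposal is correct and follows essentially the same route as the paper: differentiate $I=-\tau B'/B$, substitute the $B$-equation of \eqref{eq_full}, and observe that the $(B')^2$ coefficient is non-negative by the elementary inequality $\tfrac{1}{x}-\cot(x)\geq 0$ on $[0,\pi/2)$ while the $(A')^2$ contribution is manifestly non-negative. The only cosmetic difference is that you replace $(A')^2$ by the closed form $\sin^2(4B)/\pi^2$ (a legitimate consequence of \eqref{def_Q} and \eqref{eq_sol_Q}), whereas the paper keeps $(A')^2$ as is and cites Proposition~\ref{prop_A'}; both yield the same sign conclusion and the same quantitative lower bound \eqref{I_lower}.
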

\begin{proof}
Using \eqref{eq_full}, we observe that $I'(\tau)$ equals $2\tau$ times
\begin{equation}\label{eq_I'}
\begin{split}
   \frac{(B')^2}{B}\left(\frac{1}{2B}-\cot(2B)\right)+\frac{(A')^2}{B}\tan(2B).
\end{split}
\end{equation}
Since $(A')^2\approx B^2$ uniformly in $t>0$ by Proposition~\ref{prop_A'}, the second term is always non-negative. Therefore, it suffices to show that
\begin{equation*}
    \frac{1}{2B}-\cot(2B) \geq 0.
\end{equation*}
This is true simply because the function $h(x)=\frac{1}{x}-\cot(x)$ satisfies $h(x)\geq 0$ for any $x\in[0,\pi/2)$. For future use, we record the lower bound
\begin{equation}\label{I_lower} 
    I'(\tau) \gtrsim \tau B(\tau) \tan(2B(\tau))
\end{equation}
for any $\tau>0$, which uses the second term of \eqref{eq_I'} only.
\end{proof}
We can now deduce the global well-posedness of \eqref{eq_full} for any $0<B_0<\frac{\pi}{4}.$

\begin{corollary}
For any $0<B_0<\frac{\pi}{4},$ the local solution to \eqref{eq_full} can be extended to a global solution with $B$ decreasing and $|B(\tau)|\approx |A'(\tau)|,$ while $B$ cannot become zero in finite time. 
\end{corollary}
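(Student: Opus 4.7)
The plan is to apply the standard ODE continuation principle: since \eqref{eq_full} is smooth on $\{\tau>0,\ 0<B<\pi/4\}$, the local solution extends so long as $B$ remains in $(0,\pi/4)$ and $(A',B')$ stays bounded. Lemma~\ref{lem_B_0} already gives $B(\tau)\le B_0<\pi/4$, so the two remaining tasks are (i) bounding $A'$ and $B'$ and (ii) keeping $B$ away from $0$ on every finite time interval.

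For $A'$, I would combine the explicit formula $Q(\tau)=\tau\sin(4B)/(\pi B)$ derived in \eqref{eq_sol_Q} with the definition $Q(\tau)=-\tau A'/B$ in \eqref{def_Q} to extract the exact identity
$$A'(\tau) \;=\; -\frac{\sin(4B(\tau))}{\pi},$$
which gives the uniform bound $|A'|\le 1/\pi$ and reconfirms the equivalence $|A'|\approx B$ of Proposition~\ref{prop_A'}, since $\sin(4B)/B$ is bounded above and below on $(0,B_0]$ when $B_0<\pi/4$.

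The heart of the argument is an energy estimate for
$$E(\tau) \;:=\; \frac{B'(\tau)^2}{\sin^2(2B(\tau))}.$$
Differentiating $E$ and substituting the $B$-equation from \eqref{eq_full}, the singular term $2\cot(2B)(B')^2$ cancels exactly, and using the identity for $A'$ to simplify the remaining piece gives
$$E'(\tau) + \frac{2E(\tau)}{\tau} \;=\; \frac{2}{\pi^2}\,\frac{d}{d\tau}\cos(4B(\tau)).$$
Multiplying by $\tau^2$ yields $(\tau^2 E)'=(2\tau^2/\pi^2)(\cos 4B)'$; integrating from $0$ (where $E(0)=0$ since $B'(0)=0$) and integrating by parts on the right gives
$$E(\tau) \;=\; \frac{2}{\pi^2}\cos(4B(\tau)) \;-\; \frac{4}{\pi^2\tau^2}\int_0^\tau s\cos(4B(s))\,ds,$$
from which I read off the uniform bound $E(\tau)\le 4/\pi^2$. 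Consequently $|B'(\tau)|\le (2/\pi)\sin(2B(\tau))\le (4/\pi)B(\tau)$, which is bounded by $4B_0/\pi$.

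Finally, integrating the differential inequality $|B'/B|\le 4/\pi$ yields the quantitative lower bound $B(\tau)\ge B_0\,e^{-4\tau/\pi}>0$, so $B$ is bounded away from zero on every finite time interval and cannot vanish in finite time; combined with the uniform control on $|A'|$ and $|B'|$, the continuation principle extends the local solution to a global solution on $[0,\infty)$, on which $B$ is monotone non-increasing by Lemma~\ref{lem_B_0} and $|A'|\approx B$ by Proposition~\ref{prop_A'}. The main obstacle in this plan is identifying the correct energy quantity $E=(B')^2/\sin^2(2B)$: its particular shape is precisely what allows the singular $\cot(2B)$ contribution in the $B$-equation to cancel, and the resulting identity is then solved explicitly via the integrating factor $\tau^2$.
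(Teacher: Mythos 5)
Your proposal is correct, and it takes a genuinely different route from the paper. The paper's proof is a short contradiction argument: it combines Lemma~\ref{lem_B_0} and Proposition~\ref{prop_A'} with the bound $I'(\tau)\leq C\tau((B')^2+B^2)$ from \eqref{eq_I'} for the monotone quantity $I=-\tau B'/B$, integrates to get $-\tau B'\leq CB\int_0^\tau \tau'((B')^2+B^2)\,d\tau'$, and concludes that if $B\to 0$ at some finite $\tau_*$ then $B'\geq -B/(C\tau_*)$, contradicting the vanishing. You instead extract the exact identity $A'=-\sin(4B)/\pi$ (which is indeed implicit in the paper's formula \eqref{eq_sol_Q} for $Q$, and can be checked directly against the $A$-equation in \eqref{eq_full}) and then find the quantity $E=(B')^2/\sin^2(2B)$, for which the singular $\cot(2B)(B')^2$ term cancels and the integrating factor $\tau^2$ yields a closed integral formula; I verified the identity $E'+2E/\tau=\frac{2}{\pi^2}(\cos 4B)'$ and the resulting bound $E\leq 4/\pi^2$. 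What your approach buys is an explicit, quantitative differential inequality $|B'|\leq \frac{4}{\pi}B$ and hence the clean lower bound $B(\tau)\geq B_0e^{-4\tau/\pi}$, which is stronger than what the paper records and also gives the uniform bound on $B'$ more transparently (the paper's assertion that $B'$ is bounded is left somewhat implicit, resting on the first-order system \eqref{eq_ode_1}). What the paper's route buys is brevity and reuse of the already-established monotonicity machinery for $I$, which is needed later anyway for Theorem~\ref{thm_B}.
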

\begin{proof}
From Lemma \ref{lem_B_0} and Proposition \ref{prop_A'}, we have that $B'$ is uniformly bounded so long as the solution exists (i.e. so long as $B$ does not hit zero). From \eqref{eq_I'}, we have that 
\[\left(-\frac{\tau B'(\tau)}{B(\tau)}\right)'\leq C\tau( (B')^2+B^2).\]
It follows that 
\[\frac{-\tau B'(\tau)}{C\int_{0}^\tau \tau' ((B')^2+B^2) d\tau' }\leq B(\tau).\] Now, if $B$ becomes zero as $\tau\rightarrow\tau_*>0,$ we have that 
\[B'(\tau)\geq-\frac{B(\tau)}{C\tau_*},\] which leads to a contradiction. 
\end{proof}

\begin{lemma}\label{lem_I_1}
    There exists $T>0$ such that $I(T)>1$
\end{lemma}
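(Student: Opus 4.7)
The plan is to argue by contradiction: assume $I(\tau)\le 1$ for all $\tau>0$ and show this forces $I(\tau)\to\infty$ as $\tau\to\infty$, a contradiction. Since $B$ never vanishes in finite time by the Corollary preceding the lemma, $I$ is well-defined on all of $\bbR_+$, so the argument below is meaningful.

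First, I would translate the hypothesis $I(\tau)\le 1$ into a lower bound on $B$. Writing $I(\tau) = -\tau(\log B)'(\tau)$, the assumption gives $-(\log B)'(\tau)\le \tfrac{1}{\tau}$. Picking any $\tau_0>0$ for which $B(\tau_0)>0$ and integrating from $\tau_0$ to $\tau$ yields
\begin{equation*}
B(\tau)\;\ge\;\frac{B(\tau_0)\,\tau_0}{\tau}\qquad\text{for all }\tau\ge\tau_0.
\end{equation*}

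Next, I would plug this lower bound into the inequality \eqref{I_lower} recorded at the end of Lemma~\ref{lem_I_monotone}. Since $0<B(\tau)<B_0<\tfrac{\pi}{4}$ by Lemma~\ref{lem_B_0}, we have $\tan(2B(\tau))\ge 2B(\tau)$, so
\begin{equation*}
I'(\tau)\;\gtrsim\;\tau\,B(\tau)\tan(2B(\tau))\;\gtrsim\;\tau\,B(\tau)^2\;\gtrsim\;\tau\cdot\frac{B(\tau_0)^2\tau_0^2}{\tau^2}\;=\;\frac{B(\tau_0)^2\tau_0^2}{\tau}
\end{equation*}
for all $\tau\ge\tau_0$. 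Integrating this from $\tau_0$ to $\tau$ gives $I(\tau)-I(\tau_0)\gtrsim B(\tau_0)^2\tau_0^2\,\log(\tau/\tau_0)$, which diverges as $\tau\to\infty$ and therefore contradicts the standing assumption $I(\tau)\le 1$. Hence there must exist $T>0$ with $I(T)>1$, as claimed.

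I do not anticipate a real obstacle here: the argument is purely a Gronwall-type comparison using the monotonicity already established, together with the elementary inequality $\tan(2B)\ge 2B$ on $(0,\pi/4)$. The only mildly delicate point is ensuring that $B$ stays strictly positive, so that $I$ and its logarithmic reformulation remain well-defined on the whole interval of integration — but this is exactly the content of the preceding Corollary.
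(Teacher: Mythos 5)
Your proof is correct and follows essentially the same strategy as the paper's: assume $I(\tau)\le 1$ for all $\tau$, integrate $(\log B)'\ge -1/\tau$ to get $B(\tau)\gtrsim 1/\tau$, plug this together with $\tan(2B)\ge 2B$ into the lower bound \eqref{I_lower} to obtain $I'(\tau)\gtrsim 1/\tau$, and conclude $I(\tau)\gtrsim\log\tau\to\infty$, contradicting the standing assumption. The paper fixes $\tau_0=1$ where you use a generic $\tau_0$, but this is an inessential cosmetic difference.
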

\begin{proof}
    Suppose not for a contradiction. Then we have
\begin{equation*}
    (\log B)'(\tau) \geq -\frac{1}{\tau}
\end{equation*}
for all $\tau>0$. Integrating both sides from $1$ to $\tau>0$, we get $\log B(\tau) \geq \log (B(1) \tau^{-1})$. It leads to
\begin{equation}\label{ineq_contradiction}
    B(\tau) \geq \frac{B(1)}{\tau}.
\end{equation}
Combined with the fundamental theorem of calculus and the lower bound \eqref{I_lower}, the estimate \eqref{ineq_contradiction} further leads to
\begin{equation*}
    \begin{split}
        I(\tau)=-\frac{B'(1)}{B(1)}+\int_{1}^{\tau}I'(s)\,ds 
        \gtrsim \int_{1}^{\tau} s B(s) \tan(2B(s))\,ds \gtrsim \log{\tau}.
    \end{split}
\end{equation*}
We used $\tan(x) \geq x $ for $x\in[0,\pi/2)$. Therefore, $I(\tau)\to \infty$ as $\tau\to \infty$. A contradiction occurs.
\end{proof} 
\begin{theorem}\label{thm_B}
    For $\tau>T$, we have $B(\tau) \lesssim \tau^{-1-\delta}$ with $\delta>0$.
\end{theorem}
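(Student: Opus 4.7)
The plan is to exploit the monotonicity of $I(\tau) = -\tau B'(\tau)/B(\tau)$ established in Lemma~\ref{lem_I_monotone} together with the fact from Lemma~\ref{lem_I_1} that $I$ exceeds the critical value $1$ at some finite time $T$. Together, these two facts upgrade the borderline bound $(\log B)' \geq -1/\tau$ (which barely fails to give decay) into a strict bound $(\log B)' \leq -(1+\delta)/\tau$ on $[T,\infty)$, and integration then yields polynomial decay with the required positive exponent.

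Concretely, I would first set $\delta := I(T) - 1 > 0$, where $T$ is the time produced by Lemma~\ref{lem_I_1}. Since $I$ is non-decreasing in $\tau$ by Lemma~\ref{lem_I_monotone}, we have
\begin{equation*}
I(\tau) = -\frac{\tau B'(\tau)}{B(\tau)} \geq 1+\delta \quad \text{for all } \tau \geq T,
\end{equation*}
which is exactly the differential inequality
\begin{equation*}
(\log B)'(\tau) \leq -\frac{1+\delta}{\tau}, \qquad \tau \geq T.
\end{equation*}

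Integrating from $T$ to $\tau$ then gives $\log B(\tau) \leq \log B(T) - (1+\delta)\log(\tau/T)$, so
\begin{equation*}
B(\tau) \leq B(T)\left(\frac{T}{\tau}\right)^{1+\delta} \lesssim \tau^{-1-\delta},
\end{equation*}
where the implicit constant depends only on $B_0$ (through $B(T)$ and $T$). This yields the claim. There is essentially no obstacle here beyond making sure one quotes Lemma~\ref{lem_I_monotone} and Lemma~\ref{lem_I_1} correctly; the crucial nontrivial content — that $I$ strictly crosses the threshold $1$ and stays above it — has already been carried out in the previous lemmas.
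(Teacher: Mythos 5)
Your proposal is correct and follows exactly the same route as the paper: take $\delta$ with $I(T)\geq 1+\delta$ from Lemma~\ref{lem_I_1}, propagate it forward by the monotonicity of $I$ in Lemma~\ref{lem_I_monotone}, and integrate the resulting inequality $(\log B)'\leq -(1+\delta)/\tau$ from $T$ to $\tau$. No issues.
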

\begin{proof}
   Since $I(T)>1$ by Lemma~\ref{lem_I_1}, we can find $\delta>0$ such that $I(T)\geq 1+\delta>1$. Using the monotonicity of $I$ from Lemma~\ref{lem_I_monotone}, we further have $I(\tau)\geq 1+\delta$ for any $\tau\geq T$. This implies that
   \begin{equation*}
       (\log B)'(\tau) \leq - \frac{(1+\delta)}{\tau}.
   \end{equation*}
Integrating both sides from $T$ to $\tau$, we have $\log\left(B(\tau)\right) \leq \log \left( B(T) \tau^{-1-\delta}\right)$ and so
\begin{equation*}
    B(\tau) \leq B(T)\tau^{-1-\delta}
\end{equation*}
for any $\tau>T$.
\end{proof}

% \begin{corollary}\label{cor_A}
%     $A(\tau)$ converges to some number as $\tau\to \infty$.
% \end{corollary}
% \begin{proof}
% We use the formula $A'(\tau)= -\frac{Q(\tau)B(\tau)}{\tau}= -\frac{1}{\pi}\sin(4B(\tau))$ coming from \eqref{eq_sol_Q}. Then the initial conditions, $B(0)=B_0>0$ and $A(0)=0$, imply that $A(\tau)<0$ for any $\tau>0$ and $A(\tau)$ is monotonically decreasing. Moreover, $|A(\tau)|$ is uniformly bounded as
%     \begin{equation*}
% \begin{split}
% |A(\tau)|=\left|A(T)+\int_{T}^{\tau}A'(t)\,dt\right| \leq |A(T)|+ \frac{4}{\pi}\int_{T}^{\tau}B(t)\,dt \leq |A(T)| + \frac{4B_0}{\pi \delta T^{\delta}},
% \end{split}
%     \end{equation*}
%     thanks to Theorem~\ref{thm_B}. By the monotone convergence theorem, we conclude that $A(\tau)$ converges to its finite infimum as $\tau \to \infty$. Combined with the negativity of $A(\tau)$ for $\tau>0$, the convergence may represent the instantaneous ``turning" near the origin that occurs clock-wisely when it is effective to interpret $A$ as the center angle of the original patch, see \hyperref[fig_cusp]{Figure 3}. 
% \end{proof}

\begin{corollary}\label{cor_B'}
    For $\tau>T$, we have $|B'(\tau)| \lesssim \tau^{-2-\delta}$ with $\delta>0$.
\end{corollary}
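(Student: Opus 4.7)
The plan is to use the integral formulation \eqref{eq_ode_1} to bound $\tau B'(\tau)$ directly in terms of $B(\tau)$, and then conclude via Theorem~\ref{thm_B}. From the first equation of \eqref{eq_ode_1} we have
\[
\tau B'(\tau) = -\frac{1}{\pi}\sin(2B(\tau))\left[\cos(2A(\tau))\int_0^\tau \sin(2B)\sin(2A)\,d\tau' - \sin(2A(\tau))\int_0^\tau \sin(2B)\cos(2A)\,d\tau'\right].
\]
Using $|\sin(2B)|\leq 2B$ and $|\sin|,|\cos|\leq 1$ in every other occurrence, this yields
\[
|\tau B'(\tau)| \lesssim B(\tau)\int_0^\tau B(\tau')\,d\tau'.
\]

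Next I would show that $\int_0^\tau B(\tau')\,d\tau'$ is uniformly bounded in $\tau$. Splitting at $T$: the piece $\int_0^T B\,d\tau'$ is finite since $B\leq B_0$ on $[0,T]$, while $\int_T^\tau B(\tau')\,d\tau'\leq B(T)T^{-\delta}/\delta$ by Theorem~\ref{thm_B}. Hence $\int_0^\tau B(\tau')\,d\tau'\leq K$ for some constant $K$ independent of $\tau$.

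Combining the two estimates and invoking Theorem~\ref{thm_B} once more gives $|\tau B'(\tau)|\lesssim B(\tau)\lesssim \tau^{-1-\delta}$, whence $|B'(\tau)|\lesssim \tau^{-2-\delta}$ as claimed. There is no substantial obstacle here; the result essentially records that once $B\in L^1([0,\infty))$ (as guaranteed by Theorem~\ref{thm_B}), the integrals $J^s,J^c$ appearing in \eqref{eq_ode_1} are uniformly bounded in $\tau$, and the remaining $\sin(2B(\tau))$ prefactor transfers the full polynomial decay of $B(\tau)$ into $\tau B'(\tau)$.
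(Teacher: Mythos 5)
Your proof is correct, and it takes a somewhat different route from the one the paper sketches. The paper's one-line argument is to solve the linear first-order ODE satisfied by $I(\tau)=-\tau B'/B$ by an integrating factor, exactly as is done for $Q$ in Proposition~\ref{prop_A'}; this gives an explicit formula of the form
\[
I(\tau)=\frac{\sin(2B(\tau))}{B(\tau)}\int_0^\tau \frac{2s\,(A'(s))^2}{\cos(2B(s))}\,ds,
\]
after which boundedness of the integral follows from $|A'|\approx B$ (Proposition~\ref{prop_A'}) and the $\tau^{-1-\delta}$ decay of $B$, yielding $I(\tau)\leq C$ and hence $|B'|\lesssim B/\tau$. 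You instead bypass the second-order ODE entirely and return to the primitive integral formulation \eqref{eq_ode_1}, which immediately gives $|\tau B'(\tau)|\lesssim B(\tau)\int_0^\tau B$; the only input you then need is that $B\in L^1([0,\infty))$, which Theorem~\ref{thm_B} supplies. Your version is a bit more elementary: it avoids invoking Proposition~\ref{prop_A'} in this final step and trades the integrating-factor computation for the trivial bounds $|\sin(2B)|\leq 2B$, $|\sin|,|\cos|\leq 1$. Both arguments land on the same estimate $|\tau B'|\lesssim B\lesssim\tau^{-1-\delta}$, so the result is the same.
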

\begin{proof}
We can get an explicit formula of $I(\tau)$ as we obtained one for $Q(\tau)$ in Proposition~\ref{prop_A'}. This gives us the relation between $B$ and $B'$ so that the decay of $B$ in Theorem~\ref{thm_B} yields the result.
%    Differentiating $I(t)$ and using the formula $A'(\tau)= -\frac{Q(\tau)B(\tau)}{\tau}= -\frac{1}{\pi}\sin(4B(\tau))$ that comes from \eqref{eq_sol_Q}, we achieve 
% \begin{equation*}
% \begin{split}
%     I'(\tau)
%     % &= -2t\frac{(B')^2}{B}\left(\cot(2B)-\frac{1}{2B}\right)+2t\frac{(A')^2}{B}\tan(2B) \\
%     % &= I \left(2B'\left(\cot(2B)-\frac{1}{2B}\right) \right) + 2t \left(-\frac{BQ}{t}\right)^2 \frac{\tan(2B)}{B} \\
%     % &=  I \left(\log\left(\frac{\sin(2B)}{2B}\right)\right)' + \frac{2B}{t} \tan(2B) \frac{4}{\pi} \frac{t\sin(4B)}{4B} \\
%     &= I \left(\log\left(\frac{\sin(2B(\tau))}{2B(\tau)}\right)\right)' +\frac{2}{\pi} \tan(2B(\tau))\sin(4B(\tau))
% \end{split}
% \end{equation*}
% From $I(0)=0$, we have the explicit solution formula
% \begin{equation*}
% \begin{split}
%     I(\tau)
%     % &= \frac{2}{\pi} \cdot \frac{\sin(2B(t))}{2B(t)} \int_{0}^{t} \frac{2B(\tau)}{\sin(2B(\tau))}\tan(2B(\tau))\sin(4B(\tau)) \,d\tau \\
%     &= \frac{4}{\pi} \cdot \frac{\sin(2B(\tau))}{2B(\tau)} \int_{0}^{\tau} 2B(\tau)\sin(2B(\tau)) \,d\tau.
% \end{split}
% \end{equation*}
% The definition $I(\tau)=-\frac{\tau B'(\tau)}{B(\tau)}$ further yields
% \begin{equation*}
% \begin{split}
%     |B'(t)|=  \frac{B(\tau)}{\tau}\frac{4}{\pi} \cdot \frac{\sin(2B(\tau))}{2B(\tau)} \int_{0}^{\tau} 2B(s)\sin(2B(s)) \,ds 
%     % &\leq \frac{16}{\pi}\frac{B_0} {\tau^{2+\delta}}  \int_{0}^{\tau} B^2(s)\,ds \\
%     % &\leq \frac{32B_0^3}{\pi} t^{-2-\delta} \\
%     \leq C\tau^{-2-\delta},
% \end{split}
% \end{equation*}
% once we apply Theorem~\ref{thm_B} for $t>T$ to get the desired decay rate. 
\end{proof}

\section{Perturbation Estimates}\label{sec_perturb}
In this section, we prove the validity of the effective system by proving that the actual Euler solution is just a perturbation of the solution given by the system. 

\subsection{Set-up of the perturbative regime}
As an approximation of the solution $\omega$ to \eqref{eq_Euler}, we choose $\omega_g$ of the form
\begin{equation*}
    \omega_g(t,r,\tht) = g(t|\ln r|,\tht) \quad \mbox{for}\,\,0<r\leq c_1 \leq 1
\end{equation*}
where $g:\bbR_{+}\times[0,2\pi]\to \bbR$ satisfies
\begin{equation*}
    g(\tau,\tht) = \mathbf{1}_{[(A-B)(\tau),(A+B)(\tau)]}(\tht) + \mathbf{1}_{[\pi+(A-B)(\tau),\pi+(A+B)(\tau)]}(\tht),
\end{equation*}
where $(A,B)$ is the solution pair to the previous effective system \eqref{eq_full}. In the next subsection, Section~\ref{subsec_deriv}, we will show that such an approximation $\omega_g$ solves the following \textit{modified Euler equations}
\begin{equation}\label{eq_modified}
\begin{gathered}
    \partial_t \omg_g +(1+h)u_g\cdot\nb \omg_g =0, \quad   u_g(t,r,\tht)= \frac{1}{2\pi}\left[\begin{pmatrix}\cos\tht \\
-\sin\tht        
    \end{pmatrix}rI_g^s(r) - \begin{pmatrix}
        \sin\tht \\ \cos\tht
    \end{pmatrix}r I_g^c(r) \right],
\end{gathered}
\end{equation}
for any $0<r\leq c_1\leq 1,$ where $I_g^s(r)$ and $I_g^c(r)$ are defined by
\begin{equation}\label{def_I}
    \begin{gathered}
   I_g^s(r)= \int_{r}^{\infty}\int_{0}^{2\pi}\frac{\sin(2\tht)}{s}g(t|\ln s|,\tht)\,d\tht\,ds,\quad I_g^c(r)= \int_{r}^{\infty}\int_{0}^{2\pi}\frac{\cos(2\tht)}{s}g(t|\ln s|,\tht)\,d\tht\,ds,
\end{gathered}
\end{equation}
and $h(t,x)$ is defined by
\begin{equation}\label{def_h}
    \begin{gathered}
 h(t,x) = \frac{(x\cdot u_g)t}{r^2|\ln r|-(x\cdot u_g)t}.
\end{gathered}
\end{equation}
Setting up the differences $\overline{\omg}=\omg-\omg_g$ and $\overline{u}=u-u_g$, we obtain the \textit{perturbative equation}

\begin{equation}\label{eq_perturb}
    \partial_t \oomg + u \cdot \nb  \oomg +  \ou\cdot \nb \omg_g - h u_g\cdot \nb \omg_g = 0 \quad \mbox{for}\,\,0<r\leq c_1 \leq 1.
\end{equation}
Lastly, we define the \textit{angle difference} $\otht$ by
\begin{equation}\label{def_otht}
    \otht(t,r)=\int_{0}^{2\pi}|\oomg(t,r,\tht)|\,d\tht 
\end{equation}
in the perturbative regime. See \hyperref[fig_angle]{Figure 6} for understanding of $\otht$.
\begin{figure}[ht]
\begin{center}
\begin{tikzpicture}

\coordinate (origin) at (0,0);
\coordinate (mary) at (65:4);
\coordinate (bob) at (25:4);
\coordinate (james) at (80:4);
\coordinate (clair) at (10:4);

    \draw[sharp corners=35pt](25:4)--(0,0)--(65:4);
   \pic [thick,draw, ->, "$\Theta$", angle eccentricity=2.0] {angle = clair--origin--james};
   % \node (b) at (4.7,4) {$\color{blue}\leq 2B +2\overline{\theta}$};
   \node (c) at (3.3,3.2) {$2B$};
   \node (c) at (4.4,1.3) {$\color{red} \leq \overline{\theta}$};
   \node (c) at (1.35,4.3) {$\color{red} \leq \overline{\theta}$};
       \draw[-,>=stealth',thick] (0:0cm) (4,0) arc[radius=4, start angle=0, end angle=90];
       \node (a) at (4,-0.25) {$|x|=r$};
    \draw[black,thick] (james) -- (0,0);
    \draw[black,thick] (clair) -- (0,0);
% \curlybrace[tip angle=-2,thick,color=blue]{80}{10}{5.2};
\curlybrace[tip angle=-2,thick,color=black]{65}{25}{4.17};
\curlybrace[tip angle=-2,thick,color=red]{25}{10}{4.17};
\curlybrace[tip angle=-2,thick,color=red]{80}{65}{4.17};
\end{tikzpicture}
\caption{A typical angle decomposition at $|x|=r$ for a single patch;}
\caption*{  $|\{\tht:|\oomg(r,\tht)|=1\}|\leq \otht$  and $|\{\tht:\oomg(r,\tht)=0\}| \leq 2B$ }
\end{center}
 \label{fig_angle}
\end{figure}
We record the below simple lemma for future use as a direct consequence of the definition \eqref{def_otht} with $\ou=u-u_g$.
\begin{lemma}\label{lem_ou}
    There holds
    \begin{equation}\label{est_lem}
        |\ou(r,\tht)| \leq Cr\int_{r}^{\infty}\frac{\otht(t,s)}{s}\,ds + Cr
    \end{equation}
\end{lemma}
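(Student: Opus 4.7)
The plan is to derive the bound as an essentially direct consequence of the key velocity decomposition Lemma~\ref{lem_key} applied to the true vorticity $\omg$. First, since $\omg(t,\cdot)$ is a two-fold symmetric vortex patch with $\|\omg(t,\cdot)\|_{L^\infty}\leq 1$, Lemma~\ref{lem_key} yields
\begin{equation*}
u(r,\tht) = \frac{1}{2\pi}\begin{pmatrix}\cos\tht \\ -\sin\tht\end{pmatrix}r\,I^s(r) - \frac{1}{2\pi}\begin{pmatrix}\sin\tht \\ \cos\tht\end{pmatrix}r\,I^c(r) + R(r,\tht),
\end{equation*}
with $|R(r,\tht)|\leq Cr$, and $I^s, I^c$ as in \eqref{def_int}.

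Second, I would observe that the definition of $u_g$ in \eqref{eq_modified} is precisely the leading-order expression above, but with $\omg$ replaced by $\omg_g$ inside the integrals, i.e.\ with $I^s, I^c$ replaced by $I_g^s, I_g^c$ from \eqref{def_I}. Subtracting $u_g$ from the identity above therefore yields
\begin{equation*}
\ou(r,\tht) = \frac{r}{2\pi}\begin{pmatrix}\cos\tht \\ -\sin\tht\end{pmatrix}(I^s - I_g^s)(r) - \frac{r}{2\pi}\begin{pmatrix}\sin\tht \\ \cos\tht\end{pmatrix}(I^c - I_g^c)(r) + R(r,\tht).
\end{equation*}
Since $\omg - \omg_g = \oomg$ and $|\sin(2\tht)|,|\cos(2\tht)|\leq 1$, the triangle inequality together with the definition \eqref{def_otht} of $\otht$ gives
\begin{equation*}
|I^s(r) - I_g^s(r)| + |I^c(r) - I_g^c(r)| \leq 2\int_r^\infty \frac{\otht(t,s)}{s}\,ds.
\end{equation*}
Combining this bound with $|R|\leq Cr$ and absorbing the numerical factors into $C$ produces \eqref{est_lem}.

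There is no serious obstacle here: the statement is essentially an immediate corollary of Lemma~\ref{lem_key} combined with the definitions of $u_g$, $I_g^s$, $I_g^c$, and $\otht$. The only point that warrants a moment's care is the choice to apply Lemma~\ref{lem_key} to $\omg$ rather than separately to $\omg$ and $\omg_g$. This circumvents needing $u_g$ to literally equal $\nb^{\perp}\Delta^{-1}\omg_g$ (it is only the leading-order piece of such a velocity), and it lets the single lemma remainder $R$ absorb the entire additive $Cr$ tail of the claimed estimate.
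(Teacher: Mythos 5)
Your proposal is correct and follows essentially the same route as the paper: apply Lemma~\ref{lem_key} to $\omg$, subtract the definition of $u_g$ so the leading-order terms pair up as $I^j-I_g^j$, bound these differences by $\int_r^\infty \otht(t,s)\,s^{-1}\,ds$ via the definition of $\otht$, and absorb the Lemma~\ref{lem_key} remainder into the $Cr$ term. No issues.
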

\begin{proof}
    By Lemma~\ref{lem_key} and the definition of $u_g$ in \eqref{eq_modified}, we have
    \begin{equation*}
        u(r,\tht)-u_g(r,\tht) = \frac{r}{2\pi}\left[\begin{pmatrix}\cos\tht \\
-\sin\tht        
    \end{pmatrix}\left(I^s(r) -I_g^s(r)\right) - \begin{pmatrix}
        \sin\tht \\ \cos\tht
    \end{pmatrix} \left(I^c(r)-I_g^c(r)\right) \right] + O(r).
    \end{equation*}
 One may observe that
 \begin{equation*}
     |I^{j}(r)-I_g^{j}(r)| \leq \int_{r}^{1}\frac{\otht(t,s)}{s}\,ds
 \end{equation*}
% \begin{equation*}
%     \begin{split}
%         \cos\tht \Big(I^s(r) -I_g^s(r)\Big)-\sin\tht \Big(I^c(r)-I_g^c(r)\Big) & = \int_{r}^{\infty}\int_{0}^{2\pi}\frac{\sin(2\tht'-\tht)}{s}\Big(\omg(s,\tht')-g(t|\ln s|,\tht')\Big)\,d\tht'\,ds \\
%         &\leq \int_{r}^{\infty}\frac{\int_{0}^{2\pi}|\omg(s,\tht')-g(t|\ln s|,\tht')|\,d\tht'}{s} \,ds \\
%         &\leq \int_{r}^{\infty}\frac{\otht(t,s)}{s}\,ds
%     \end{split}
% \end{equation*}
for $j=s,c$. Then \eqref{est_lem} follows.
\end{proof}

\subsection{Derivation of the modified Euler equations}\label{subsec_deriv}
In this section, we rigorously derive the modified Euler equation \eqref{eq_modified} by reversing the formal derivation performed in Section~\ref{sec_derivation}. We consider the function $\omg_g$ of the form 
\begin{equation}\label{g_form}
    \omg_g(t,x)=\omg_g(t,r,\tht)=g(t|\ln r|,\tht) \,\,\, \mbox{for}\,\, 0<r\leq c_1 \leq 1,
\end{equation}
where $g$ is the solution to the angular transport equation \eqref{eq_g_transport}, which we record again as
\begin{equation}
    \begin{gathered}
        \partial_\tau g - \frac{1}{2\pi \tau}\bigg[\sin(2\theta)J^{s}(\tau) +\cos(2\theta)J^{c}(\tau)\bigg]\partial_\theta g=0.
    \end{gathered}
\end{equation}
where $J^s(\tau)$ and $J^{c}(\tau)$ are defined in \eqref{def_J_int}. Multiplying the above by $|\ln r|$ yields 
\begin{equation}\label{eq_g_2}
\begin{gathered}
    |\ln r|\partial_\tau g + \frac{r}{2\pi t}\bigg[\begin{pmatrix}\cos\theta \\
-\sin\theta        
    \end{pmatrix}J^{s}(\tau) - \begin{pmatrix}
        \sin\theta \\ \cos\theta
    \end{pmatrix} J^{c}(\tau)\bigg]
    \cdot\bigg[ \frac{\partial_\theta g}{r} \begin{pmatrix}
        -\sin\theta \\ \cos\theta
    \end{pmatrix}
    +\left(\frac{t\partial_\tau g}{r}-\frac{t\partial_\tau g}{r}\right)\begin{pmatrix}
        \cos\theta \\ \sin\theta
    \end{pmatrix}\bigg]=0.
\end{gathered}
\end{equation}
The rightmost two terms are from the simple trick of addition and subtraction. Then we observe that solving the above equation \eqref{eq_g_2} for $g=g(\tau,\theta)=g(t|\ln r|,\theta)$ is equivalent to solving for $\omega_g(t,r,\theta)=g(t|\ln r|,\theta)$ the following 
\begin{equation*}
\begin{gathered}
    \partial_t \omega_g+ u_g\cdot \nb \omega_g= - u_g \cdot \begin{pmatrix}
        x_1 \\ x_2
    \end{pmatrix} \frac{t}{r^2}\partial_\tau g, \quad
    u_g(r,\tht) = \frac{1}{2\pi}\left[\begin{pmatrix}\cos\theta \\
-\sin\theta        
    \end{pmatrix}rI_g^s(r) - \begin{pmatrix}
        \sin\theta \\ \cos\theta
    \end{pmatrix}r I_g^c(r) \right],
\end{gathered}
\end{equation*}
where $I_g^s(r)$ and $I_g^c(r)$ are defined in \eqref{def_I}. From the direct computation $\partial_\tau g(\tau,\theta) = \frac{1}{|\ln r|} \partial_t \omega_g(t,r,\theta)$, we see further that the above equation is  equivalent to
\begin{equation*}
   \left(1 - \frac{tx\cdot u_g}{r^2 |\ln r|} \right) \partial_t \omega_g+ u_g\cdot \nb \omega_g = 0.
\end{equation*}
Dividing the above by $\left(1 - \frac{tx\cdot u_g}{r^2 |\ln r|} \right)$, we get
\begin{equation*}
    \partial_t \omega_g+ \left(1+ \frac{(x\cdot u_g)t}{r^2|\ln r|-(x\cdot u_g)t}\right)u_g\cdot \nb \omega_g  = 0.
\end{equation*}
This leads to the desired modified Euler equations \eqref{eq_modified}-\eqref{def_h}. Our initial data should be matched with the condition
\begin{equation}
    g(0,\tht)=\mathbf{1}_{[-B_0,B_0]}(\tht) + \mathbf{1}_{[\pi-B_0,\pi+B_0]}(\tht)
\end{equation}
so that we can ensure the initial compatibility between $\omega$ and $\omega_g$ as
\begin{equation*}
\omega(0,r,\tht)-\Xi(r,\tht)=\omega_g(0,r,\tht)=g(0,\tht)= \mathbf{1}_{[-B_0,B_0]}(\tht) + \mathbf{1}_{[\pi-B_0,\pi+B_0]}(\tht)
\end{equation*}
for all $0< r \leq c_1$ with $c_1<1$ in view of \eqref{eq_initial}-\eqref{eq_initial_2} and \eqref{g_form}.

% \begin{theorem}[Perturbed cusp formation]\label{thm_cusp}
%      Let $\omega(t,x)=\mathbf{1}_{\Omega(t)}(x)$ be the unique vortex patch solution to \eqref{eq_Euler} for the initial data $\Omega(0)(x)=\mathbf{1}_{\Omega(0)}(x)$ given in Section~\ref{sec_initial}. For the corresponding quantity  $\otht$ defined in \eqref{def_otht}, there exists a continuous function  $\eta:\bbR_{+}\to \bbR_{+}$ satisfying $$\lim_{r\to0}\eta(r)=0$$ such that there holds
%     \begin{equation*}
%         \lim_{r0^{+}}F(t(r),r)=0
%     \end{equation*}
%     for any $t=t(r)$ satisfying $0<t(r)\leq \eta(r)$.
% \end{theorem}
% \begin{remark}
%     The function $\eta$ above is the same with the one in Theorem~\ref{thm_main}.
% \end{remark}

\subsection{Average estimates on the perturbed vorticity}

Let $\eta=\eta(r)$ denote the time scale for the patch solution to form a cusp at $|x|=r$.  Define the average of the perturbed vorticity $F=F(r)$ by
\begin{equation}\label{def_F_1}
    F(t,r)=\frac{1}{|B_r|}\int_{B_r}|\oomg(t,x)|\,dx.
\end{equation}
In polar coordinates, \eqref{def_otht} gives
\begin{equation}\label{def_F_2}
\begin{split}
    F(t,r)=\frac{1}{\pi r^2}\int_{0}^{r}\int_{0}^{2\pi}s|\oomg(t,s,\tht)|\,d\tht\,ds= \frac{1}{\pi r^2}\int_{0}^{r}s\otht(t,s)\,ds.
    \end{split}
\end{equation}
Let $F(t,r)=0$ for $r>1$, because we are looking at the region near the origin $r=0$.
\subsubsection{Smooth approximation}
Since we are working in the frame of weak solutions, we perform an approximation argument as follows. Let $\{\omg_0^{\varepsilon}\}_{\varepsilon>0}$ be a family of bump functions such that $\omg_0^{\veps} \to \omg_0$ as $\veps\to 0^{+}$ pointwisely and the following additionally holds:
\begin{itemize}
    \item  $\omg_0^\veps(x)=1$ if $ x\in\Omg(0)$, $\omg_0^\veps(x)=0$ if $d(x,\Omg(0))\geq \veps$, and $0\leq \omg_0(x) \leq 1$ if $d(x,\Omg(0))\leq \veps$,
    \item $|\nb \omg_0^\veps|_{L^\infty} \leq C\veps^{-1}$ for some absolute constant $C>0$,
    \item For each $x\in \bbR^2$, the sequence $\{\omg_0^\veps(x)\}$ is non-increasing as $\veps\to 0^{+}$,
    \item  $\supp \omg_0^\veps \subset \bbR^2$ is simply connected.
\end{itemize}
Consider another family of bump functions, $\{\omg_0^\dlt\}_{\dlt>0}$, satisfying the same properties described above. Then denote by $\omg^{\varepsilon}$ and $\omg_g^{\dlt}$ the corresponding smooth solutions to \eqref{eq_Euler} and \eqref{eq_modified}. Setting $\oomg^{\veps,\dlt}=\omg^{\veps}-\omg_g^{\dlt}$ and $\ou^{\veps,\dlt}=u^{\veps}-u_g^{\dlt}$, from \eqref{eq_Euler} and \eqref{eq_modified}, we deduce
\begin{equation}\label{eq_perturb_smooth}
    \partial _t \oomg^{\veps,\dlt} + u^\veps \cdot \nb \oomg^{\veps,\dlt} + \ou^{\veps,\delta}\cdot \nb \omg_g^\dlt - h^\dlt u_g^\dlt \cdot \nb \omg_g^\dlt = 0,
\end{equation}
which corresponds to \eqref{eq_perturb}. We also can define $\otht^{\veps,\dlt}$ and $F^{\veps,\dlt}$ accordingly to \eqref{def_otht} and \eqref{def_F_1}-\eqref{def_F_2}, simply replacing $\oomg$ by $\oomg^{\veps,\dlt}$. Note that $F(t,r)=\lim_{\veps,\dlt\to 0^+}F^{\veps,\dlt}(t,r)$ for any $t\geq0$ and $r>0$. The analogous domains are defined by $\Omg^{\veps}(t)=\operatorname{supp}\omg^{\veps}(t)$ and  $\Omg_g^{\dlt}(t)=\operatorname{supp}\omg_g^{\dlt}(t)$.

\subsubsection{Recursive estimates}
 For any $C_{\ast}>0$,
denote by $\Phi_t(r)$ the unique solution to the ODE
\begin{equation}\label{Phi}
\begin{cases}
    \partial_t \Phi(t,r) = \mathbf{v}(t,\Phi(t,r)), \\
    \Phi(0,r) = r,
\end{cases}
\end{equation}
where the velocity $\mathbf{v}=\mathbf{v}(t,r)$ is defined by
\begin{equation}\label{v_Phi}
    \mathbf{v}(t,r) = - C_{\ast} r \int_{r}^{1}\frac{B(t|\ln s|)}{s}\,ds.
\end{equation}
\begin{proposition}[Key Proposition]\label{prop_F} 
There exists an absolute constant $C_{\ast}>0$
 such that $\Phi$ determined via \eqref{Phi}-\eqref{v_Phi} satisfies 
 \begin{equation}
    F(t,\Phi_{t}(r)) \leq F(0,r)+ Ct+C\int_{0}^{t}\left(t'|\ln \Phi_{t'}(r)|+ \int_{\Phi_{t'}(r)}^{1}\frac{F(t',s)+F(t',\Phi_{t'}(r))}{s}\,ds\right)\,dt'
\end{equation}
for some absolute constant $C>0$.
\end{proposition}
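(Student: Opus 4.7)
The plan is to work with smooth approximations $(\omega^\varepsilon, \omega_g^\delta)$ satisfying \eqref{eq_perturb_smooth}, derive a differential inequality for $F^{\varepsilon,\delta}(t, \Phi_t(r))$ by differentiating along the prescribed radial flow, integrate in $t$, and pass to the limit $\varepsilon, \delta \to 0^+$. Write $R(t) := \Phi_t(r)$ so $R'(t) = \mathbf{v}(t, R(t))$. Combining Reynolds' transport theorem for $B_{R(t)}$, the chain rule $\partial_t|\bar\omega^{\varepsilon,\delta}| = \mathrm{sgn}(\bar\omega^{\varepsilon,\delta})\partial_t\bar\omega^{\varepsilon,\delta}$ applied to \eqref{eq_perturb_smooth}, and integration by parts using $\nabla \cdot u^\varepsilon = 0$, yields
\begin{equation*}
\frac{d}{dt}F^{\varepsilon,\delta}(t, R(t)) = -\frac{2\mathbf{v}}{R}F^{\varepsilon,\delta} + \frac{1}{\pi R}\int_0^{2\pi}[\mathbf{v}-u^\varepsilon_r]\,|\bar\omega^{\varepsilon,\delta}|\,d\theta + \frac{\mathcal{I}_1 + \mathcal{I}_2}{\pi R^2},
\end{equation*}
where $\mathcal{I}_1 = -\int_{B_R}\mathrm{sgn}(\bar\omega^{\varepsilon,\delta})\,\bar u^{\varepsilon,\delta} \cdot \nabla\omega_g^\delta\,dx$ and $\mathcal{I}_2 = \int_{B_R}\mathrm{sgn}(\bar\omega^{\varepsilon,\delta})\,h^\delta u_g^\delta \cdot \nabla\omega_g^\delta\,dx$. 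Dropping the negative part of the boundary integrand produces an upper bound with $[\mathbf{v}-u^\varepsilon_r]_+$ in its place.

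The critical step is to estimate the positive part of $\mathbf{v} - u^\varepsilon_r$. By Lemma \ref{lem_key} applied to $\omega^\varepsilon$, combined with the explicit computations $\int_0^{2\pi}\sin(2\theta)g(\tau,\theta)\,d\theta = 2\sin(2A)\sin(2B)$ and its cosine analog, the $\omega_g^\delta$-part contributes $|I^{\bullet}_{\omega_g^\delta}(R)| \leq C_0\int_R^1 B(t|\ln s|)/s\,ds$, while the $\bar\omega^{\varepsilon,\delta}$-part contributes $|I^{\bullet}_{\bar\omega^{\varepsilon,\delta}}(R)| \leq \int_R^1 \bar\theta^{\varepsilon,\delta}(t,s)/s\,ds$. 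Choosing $C_* = 2C_0$, we obtain
\begin{equation*}
\mathbf{v}(t,R) - u^\varepsilon_r \leq -C_0 R\int_R^1 \frac{B(t|\ln s|)}{s}\,ds + CR\int_R^1 \frac{\bar\theta^{\varepsilon,\delta}(t,s)}{s}\,ds + CR,
\end{equation*}
whose positive part, together with $\bar\theta(t, R) \leq 2\pi$, yields a boundary flux bounded by $C\int_R^1 \bar\theta^{\varepsilon,\delta}/s\,ds + C$. For the interior terms, Lemma \ref{lem_ou} gives $\|\bar u^{\varepsilon,\delta}\|_{L^\infty(B_R)} \leq CR\int_R^1 \bar\theta/s\,ds + CR$, and the rectifiability of $\partial\Omega_g(t)$ (inherited from $A, B \in C^2$) yields $\int_{B_R}|\nabla\omega_g^\delta|\,dx \to \mathcal{H}^1(\partial\Omega_g(t)\cap B_R) \leq CR$ as $\delta \to 0^+$, so that $|\mathcal{I}_1|/(\pi R^2) \leq C\int_R^1 \bar\theta/s\,ds + C$. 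Similarly, $|h^\delta| \leq Ct$ (from $|u_g^\delta| \leq CR|\ln R|$ applied to \eqref{def_h}) yields $|\mathcal{I}_2|/(\pi R^2) \leq Ct|\ln R|$. Finally, $-2\mathbf{v}/R \cdot F^{\varepsilon,\delta} \leq C|\ln R|\,F^{\varepsilon,\delta}(t, R) = C\int_R^1 F^{\varepsilon,\delta}(t, R)/s\,ds$.

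The final step uses the distributional identity $\bar\theta(t, s) = 2\pi F(t, s) + \pi s\,\partial_s F(t, s)$ to rewrite $\int_R^1 \bar\theta/s\,ds = 2\pi\int_R^1 F/s\,ds + \pi[F(t,1) - F(t,R)] \leq C\int_R^1 F/s\,ds + C$. Integrating the resulting differential inequality in $t$ from $0$ to $t$ and passing to the limit $\varepsilon, \delta \to 0^+$ via pointwise a.e.\ convergence and dominated convergence yields the claimed bound. The main obstacle is the boundary flux: the calibration $C_* = 2C_0$ is precisely what is needed so that the positive part $[\mathbf{v} - u^\varepsilon_r]_+$ does not contain the $|\ln R|$-sized contribution from $u_g^\delta$ (which would otherwise produce an unwanted $C|\ln R|$ term in $\frac{d}{dt}F$), leaving a residual controllable purely by $\int_R^1\bar\theta^{\varepsilon,\delta}/s\,ds$. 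A secondary difficulty is justifying the $\delta \to 0^+$ convergence of $\int_{B_R}|\bar u^{\varepsilon,\delta}|\,|\nabla\omega_g^\delta|\,dx$ to a length integral on $\partial\Omega_g(t)\cap B_R$, which depends on the $C^2$-regularity of $A, B$.
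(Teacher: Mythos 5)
Your overall architecture is sound and captures the two key ideas: (i) compute $\frac{d}{dt}F^{\varepsilon,\delta}(t,\Phi_t(r))$ along the moving ball via Reynolds' transport theorem, producing a boundary flux $[\mathbf{v}-u^\varepsilon_r]$; and (ii) calibrate $C_\ast$ so that the $\omega_g^\delta$-part of the radial velocity, which by Lemma~\ref{lem_key} is bounded by a multiple of $R\int_R^1 B(t|\ln s|)/s\,ds$, is absorbed by $\mathbf{v}$, leaving only the $\bar\omega$-contribution in the positive part. This is conceptually equivalent to what the paper does: it fixes $r$, derives the differential inequality \eqref{before_flow} with the $\partial_r F$ term absorbed into $-\mathbf{v}^\delta\partial_r F$, and then composes with the flow afterward. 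Both mechanisms recover the $F(t',\Phi_{t'}(r))\int_{\Phi_{t'}(r)}^1 ds/s$ term — in your version it appears from the $-2\mathbf{v}/R\cdot F$ Jacobian of the shrinking ball, in the paper's from the $2\pi F$ piece of $\bar\theta=2\pi F+\pi r\partial_r F$ multiplied against $\int\theta_g^\delta/s$. The $\mathrm{sgn}(\bar\omega^{\varepsilon,\delta})$ device is an acceptable replacement for the paper's testing against $2\bar\omega^{\varepsilon,\delta}$.

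The genuine gap is the treatment of the interior term $\mathcal{I}_1=-\int_{B_R}\mathrm{sgn}(\bar\omega^{\varepsilon,\delta})\,\bar u^{\varepsilon,\delta}\cdot\nabla\omega_g^\delta$. You bound it by $\|\bar u^{\varepsilon,\delta}\|_{L^\infty(B_R)}\int_{B_R}|\nabla\omega_g^\delta|$ and then assert $\int_{B_R}|\nabla\omega_g^\delta|\to\mathcal H^1(\partial\Omega_g(t)\cap B_R)\le CR$. Neither the limit nor the $\delta$-uniform bound is immediate. Lower semicontinuity of total variation under $L^1$ convergence only gives $\liminf_{\delta\to0}\int_{B_R}|\nabla\omega_g^\delta|\ge\mathcal H^1(\partial\Omega_g\cap B_R)$; the reverse inequality (strict BV convergence) is exactly what must be proved, and it is not automatic here because $\omega_g^\delta(t,\cdot)$ is not a mollification of $\omega_g(t,\cdot)$ but the time-$t$ evolution of a mollified initial datum under a log-Lipschitz flow. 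The worst-case Yudovich bound on $\nabla\phi_g^\delta$ gives arc-length control like $R^{1-O(t)}$ for images of nearby level curves, not $CR$; the bound $\le CR$ for the sharp boundary $\partial\Omega_g$ uses its explicit parametrization $r'\mapsto(r',(A\pm B)(t|\ln r'|))$ together with the boundedness of $A'$ and $B'$, an argument that does not transfer to the smoothed level sets $\{\omega_g^\delta=c\}$. The paper sidesteps this precisely by integrating by parts to obtain $J_1=J_{1,a}+J_{1,b}+J_{1,c}$: the volume piece $J_{1,a}$ is bounded by $C(\varepsilon)r^{-2}|A^\delta|\to0$ using only the smallness of the support of $\nabla\omega_g^\delta$, and the surviving piece $J_{1,c}$ is a curve integral over the \emph{inner} component of $\partial A^\delta$ (selected by the $\omega_g^\delta$ factor), which is the image of $\partial\Omega_0$ under $\phi_g^\delta$ and therefore admits the explicit $\delta$-uniform integrable majorant $Cs^{1/2}$ used for dominated convergence. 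You flag this as a "secondary difficulty," but in fact it is the central technical obstacle of the proof; to complete your argument you would need to replace the naive total-variation bound with something like the paper's decomposition, or independently establish strict BV convergence of $\omega_g^\delta(t,\cdot)$ to $\chi_{\Omega_g(t)}$ on $B_R$ with a quantitative $CR$ bound.

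Two small additional remarks. First, $\nabla\cdot\bar u^{\varepsilon,\delta}=-\nabla\cdot u_g^\delta\ne0$ in general because $u_g$ is a truncated rather than exact Biot--Savart; this produces a volume term in your $\mathcal{I}_1$ computation and in the integration by parts for $u^\varepsilon\cdot\nabla|\bar\omega|$ that you should account for (it contributes a further $C$ and $C\int F/s$ after absorption, so it is harmless but should be visible). Second, in estimating $\mathcal{I}_2$ you should record the pointwise bound $|h^\delta(t,x)|\lesssim \frac{t}{|\ln|x||}\int_{|x|}^1 B(t|\ln s|)/s\,ds$ (not merely $|h^\delta|\lesssim t$), since it is the extra factor of $1/|\ln|x||$ that makes the double integral close to $Ct'|\ln R|$ rather than $Ct'|\ln R|^2$.
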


\begin{proof}
 We test \eqref{eq_perturb_smooth} against $2\oomg^{\veps,\dlt}$. Integrating the result on the ball $B_r=B_r(0)$ and then dividing it by $|B_r|$, one obtains
    \begin{equation*}
    \begin{split}
          \partial _t F^{\veps,\dlt}(t,r) &= \frac{2}{|B_r|}\left(-\frac{1}{2} \int_{\partial B_r} |\oomg^{\veps,\dlt}|^2u^\veps\cdot n \,dS - \int_{B_r}\ou^{\veps,\dlt}\cdot\nb \omg_g^\dlt\oomg^{\veps,\dlt}\,dx + \int_{B_r}h^\dlt u_g^\dlt \cdot \nb\omg_g^\dlt \oomg^{\veps,\dlt}\,dx \right) \\
          &=: I + J + K,
    \end{split} 
    \end{equation*}
where integration by parts is used with $\operatorname{div}u^{\veps}=0$. The first term $I$ can be estimated as
\begin{equation*}
    \begin{split}
        I \leq \frac{1}{|B_r|} \int_{\partial B_r} |\oomg^{\veps,\dlt}|^2 \left(|\ou^{\veps,\dlt}|+|u_g^{\dlt}|\right)\,dS
    \end{split}
\end{equation*}
by the decomposition $u^\veps=\ou^{\veps,\dlt}+u_g^{\dlt}$.  Combining the fact $|\oomg^{\veps,\dlt}|^2\leq |\oomg^{\veps,\dlt}|$ with the definition of $\otht^{\veps,\dlt}$, we employ \eqref{lem_ou} to see 
\begin{equation*}
    I\leq C \otht^{\veps,\dlt}(t,r) + C \otht^{\veps,\dlt}(t,r) \int_{r}^{\infty}\frac{\otht^{\veps,\dlt}(t,s)}{s}\,ds + C\otht^{\veps,\dlt}(t,r) \int_{r}^{\infty} \frac{\tht_g^{\dlt}(t,s)}{s}\,ds.
\end{equation*}
Due to $\otht^{\veps,\dlt} \leq 2\pi$, the first term in the above is bounded by another constant $C$ with slight abuse of notation. To estimate the other terms, we observe the pointwise relation between $\otht^{\veps,\dlt}$ and $F^{\veps,\dlt}$:
\begin{equation}\label{relation_pointwise}
    \otht^{\veps,\dlt}(t,r) = 2\pi F^{\veps,\dlt}(t,r) +\pi r \partial_rF^{\veps,\dlt}(t,r).
\end{equation}
Since both $\omg^\veps$ and $\omg_g^{\dlt}$ are compactly supported in $B_1(0)$, \eqref{relation_pointwise} leads to
\begin{equation*}
    \begin{split}
        I \leq C + C\int_{r}^{1}\frac{F^{\veps,\dlt}(t,s)}{s}\,ds + C F^{\veps,\dlt}(t,r)\int_{r}^{1}\frac{\tht_g^{\dlt}(t,s)}{s}\,ds - \mathbf{v^{\dlt}}(t,r)\partial_r F^{\veps,\dlt}(t,r),
    \end{split}
\end{equation*}
where the velocity $\mathbf{v}^{\dlt}=\mathbf{v}^{\dlt}(t,r)$ is defined by
\begin{equation}\label{def_v}
    \mathbf{v}^{\dlt}(t,r) = - C r \int_{r}^{1}\frac{\tht_g^{\dlt}(t,s)}{s}\,ds.
\end{equation}
We estimate $J$ next. From $\oomg^{\veps,\dlt}=\omg^\veps-\omg_g^{\dlt}$, we decompose $J$ as
\begin{equation*}
    \begin{split}
        J &= -\frac{2}{|B_r|}\int_{B_r}\ou^{\veps,\dlt}\cdot \nb \omg_g^{\dlt}\omg^{\veps}\,dx +\frac{2}{|B_r|}\int_{B_r}\ou^{\veps,\dlt}\cdot \nb \omg_g^{\dlt}\omg_g^{\dlt}\,dx \\
        &=: J_1 + J_2.
    \end{split}
\end{equation*}
Integration by parts and \eqref{relation_pointwise} yield
\begin{equation*}
    \begin{split}
        J_2& = \frac{1}{|B_r|}\left(-\int_{B_r}\operatorname{div}\ou^{\veps,\dlt} |\omg_g^\dlt|^2\,dx + \int_{\partial B_r}|\omg_g^\dlt|^2\ou^{\veps,\dlt}\cdot n\,dS \right) \\
        &\leq C + C \tht_g^{\dlt}(t,r)\int_{r}^{\infty}\frac{\otht^{\veps,\dlt}(t,s)}{s}\,ds \\
        &\leq C + C \int_{r}^{1}\frac{F^{\veps,\dlt}(t,s)}{s}\,ds
    \end{split}
\end{equation*}
in a similar fashion to the previous estimate of $I$. We also see that $J_1$ satisfies
\begin{equation*}
    \begin{split}
        J_1 & = \frac{2}{|B_r|}\left(\int_{B_r \cap A^{\dlt}}\operatorname{div}(\ou^{\veps,\dlt}\omg^{\veps})\omg_g^{\dlt}\, - \int_{\partial (B_r\cap A^\dlt)} \omg_g^\dlt\omg^\veps\ou^{\veps,\dlt}\cdot n\,dS\right) \\
        &=  \frac{2}{|B_r|}\left(\int_{B_r \cap A^{\dlt}}\operatorname{div}(\ou^{\veps,\dlt}\omg^{\veps})\omg_g^{\dlt}\, - \int_{\partial B_r\cap A^\dlt} \omg_g^\dlt\omg^\veps\ou^{\veps,\dlt}\cdot n\,dS - \int_{B_r\cap \partial A^\dlt} \omg_g^\dlt\omg^\veps\ou^{\veps,\dlt}\cdot n\,dS\right) \\
        &=: J_{1,a} + J_{1,b} + J_{1,c},
    \end{split}
\end{equation*}
where $A^\delta:=\supp \nb \omg_g^{\dlt}$. Notice that the second equality holds due to the geometric feature of the surface integral oriented by the outward-pointing unit vector $n$. One can easily check that $J_{1,a}$ is simply bounded by $C(\veps)r^{-2}|A^{\dlt}|$ for some constant $C(\veps)$ depending only on $\veps>0$. For $J_{1,b}$, we compute
\begin{equation*}
    \begin{split}
        J_{1,b} &\leq \frac{C}{|B_r|}\int_{\partial B_r} |\ou^{\veps,\dlt}|\,dS \\
        &\leq C+C \int_{r}^{1}\frac{F^{\veps,\dlt}(t,s)}{s}\,ds 
    \end{split}
\end{equation*}
similarly to the previous estimates. It follows that
\begin{equation}\label{est_J}
    J \leq C+C \int_{r}^{1}\frac{F^{\veps,\dlt}(t,s)}{s}\,ds + C(\veps)r^{-2}|A^{\dlt}| + J_{1,c}.
\end{equation}
Since $K$ can be estimated analogously to the estimate of $J$, we simply record the result:
\begin{equation*}
    K \leq \left(C(\veps)r^{-2} + Ctr^{-1}|\ln r|\right)|A^{\dlt}| + K_{1,c},
\end{equation*}
where $K_{1,c}$ is defined by
\begin{equation*}
    K_{1,c} = \frac{2}{|B_r|}\int_{B_r\cap \partial A^{\dlt}}h^{\dlt}\omg_g^{\dlt}\omg^{\veps}u_g^{\dlt}\cdot n \,dS   .
\end{equation*}
The only difference is, compared to $J$, that $K$ involves $h^{\dlt}$ whose uniform-in-$\dlt$ pointwise estimate can be directly computed as
\begin{equation*}
    |h^{\dlt}(t,x)|\lesssim t \quad\mbox{and}\quad |\nb h^{\dlt}(t,x)| \lesssim \frac{t}{|x|}. 
\end{equation*}
Collecting all of the above estimates, we establish
\begin{equation}\label{before_flow}
\begin{split}
    \partial_t F^{\veps,\dlt}(t,r) + \mathbf{v}^{\dlt}(t,r)\partial_r F^{\veps,\dlt}(t,r) &\leq C + C\int_{r}^{1}\frac{F^{\veps,\dlt}(t,s)+F^{\veps,\dlt}(t,r)}{s}\,ds \\
    &\quad\quad\quad +\left(Ctr^{-1}|\ln r|+ C(\veps)r^{-2}\right)|A^{\dlt}| + J_{1,c}+K_{1,c}. 
\end{split}
\end{equation}
We postpone estimating $J_{1,c}$ and $K_{1,c}$ until we take the limit $\delta\to0^{+}$.

To perform further estimates along the flow advected by $\mathbf{v}^{\dlt}$, we define the corresponding flow map $\Phi^{\dlt}$ via the family of the initial value problems
\begin{equation}\label{flowmap}
\begin{cases}
    \partial_t \Phi^{\dlt}(t,r) = \mathbf{v}^{\dlt}(t,\Phi^{\dlt}(t,r)), \\
    \Phi^{\dlt}(0,r) = r.
\end{cases}
\end{equation}
\begin{lemma}\label{lem_log_lip}
    The velocity $\mathbf{v}^{\dlt}$ in \eqref{def_v} is log-Lipschitz continuous, independently of $\delta>0$.
\end{lemma}
\begin{remark}\label{rmk_zero} $\Phi^{\dlt}(t,0)=0$ for any $t\geq 0$ because
    $\mathbf{v}^{\dlt}(t,0) = 0$ for any $t\geq 0$. This also implies that if $r>0$, then $\Phi^{\dlt}(t,r)>0$ for all $t\geq 0$.
\end{remark}
\begin{remark}\label{rmk_loglip}
    For any fixed $r>0$, $\Phi^{\dlt}(t,r)$ is always decreasing in $t$.
\end{remark}
\begin{proof}[Proof of Lemma~\ref{lem_log_lip}]
Let $r<1$. Without loss of generality, for any $h>0$,
\begin{equation*}
    \begin{split}
        \left|\mathbf{v}^{\dlt}(r+h)-\mathbf{v}^{\dlt}(r) \right|&\leq  Ch\int_{r+h}^{1}\frac{\tht_g^{\dlt}(t,s)}{s}\,ds + Cr\int_{r}^{r+h}\frac{\tht_g^{\dlt}(t,s)}{s}\,ds \\
        &\leq Ch|\ln(r+h)| + C h\\
        &\leq C h |\ln h| + C h
    \end{split}
\end{equation*}
where we used $\frac{1}{r+h} \leq \frac{1}{h}$ in the last inequality. For $r\geq 1$, it is simply bounded by $Ch$.
    \end{proof}
\noindent A consequence of Lemma~\ref{lem_log_lip} is that the flow map ODE \eqref{flowmap} is globally well-posed. Furthermore, it ensures that we can rely on the well-known Yudovich estimates
\begin{equation}
\label{est_Yudovich}
\begin{split}
        r^{e^{ct}}\leq |\Phi_t^{\dlt}(r)-\Phi_t^{\dlt}(0)|&=|\Phi_t^{\dlt}(r)|=\Phi_t^{\dlt}(r)\leq r, \\
        r \leq |(\Phi_t^{\dlt})^{-1}(r)-(\Phi_t^{\dlt})^{-1}(0)|&=|(\Phi_t^{\dlt})^{-1}(r)|=(\Phi_t^{\dlt})^{-1}(r) \leq r^{e^{-ct}}
\end{split}
\end{equation}
where the trivial bounds come from Remark~\ref{rmk_zero}-\ref{rmk_loglip}. The constant $c$ in \eqref{est_Yudovich} is uniform in $\dlt>0$. Using the definition of $\Phi^{\dlt}$, one can obtain
\begin{equation*}
    \begin{split}
        F^{\veps,\dlt}(t,\Phi_t^\dlt(r))-F^{\veps,\dlt}(0,r) 
        &\leq Ct+ C\int_{0}^{t}\int_{\Phi_{t'}^{\dlt}(r)}^{1}\frac{F^{\veps,\dlt}(t',s)+F^{\veps,\dlt}(t',r)}{s}\,ds\,dt' \\
        &\quad +\int_{0}^{t}\left(Ct'\frac{|\ln \Phi_{t'}^{\dlt}(r)|}{\Phi_{t'}^{\dlt}(r)}+C(\veps)(\Phi_{t'}^{\dlt}(r))^{-2}\right)|A^{\dlt}|\,dt' + \int_{0}^{t} (J_{1,c}+K_{1,c})\,dt'.
    \end{split}
\end{equation*}
Now we take the limit $\dlt\to 0^{+}$. Every term in the first line converges by the dominated convergence theorem due to \eqref{est_Yudovich} and the uniform bound $$F^{\veps,\dlt}\leq 1.$$ For the second line, we know that
\begin{equation}\label{claim}
    \lim_{\dlt\to0^{+}}\sup_{t'\in [0,t]}|A^{\dlt}(t')|=0
\end{equation}
thanks to the Yudovich estimate associated with $(1+h^\dlt)u_g^\dlt$ and our choice of $\{\omg_0^{\dlt}\}_{\dlt>0}$. Then the first term in the second line vanishes as $\dlt\to 0^{+}$. We are left with the last term involving $J_{1,c}$ and $K_{1,c}$. See \hyperref[fig_approx]{Figure 7} below for general understanding before embarking on rigorous computation.

\begin{figure}[ht]
\begin{center}
\begin{tikzpicture}

\coordinate (origin) at (0,0);
\coordinate (mary) at (65:5);
\coordinate (bob) at (25:5);
\coordinate (james) at (80:5);
\coordinate (clair) at (10:5);

    \draw[black,thick,sharp corners=35pt](35:5)--(0.05,0.05)--(55:5);
    \draw[blue,thick,sharp corners=35pt](25:5)--(0,0)--(65:5);
    % \pic [thick,draw, ->, "$\Theta$", angle eccentricity=2.0] {angle = clair--origin--james};
    \node (b) at (4.3,3.95) {$\color{blue} \omega_g^{\delta} = 1$};
   % \node (a) at (3.3,3.2) {$2B$};
    \node (b) at (6.05,1.6) {$\color{blue}  0\leq \omega_g^{\delta} \leq 1$};
    \node (c) at (1.6,5.4) {$\color{blue} 0\leq \omega_g^{\delta} \leq 1$};
       \draw[-,>=stealth',thick] (0:0cm) (5,0) arc[radius=5, start angle=0, end angle=90];
       \draw[blue,->,dashed,>=stealth] (25:0cm) (4,1.9)  arc[radius=4, start angle=25, end angle=35];
       \draw[blue,->,dashed,>=stealth] (25:0cm) (1.9,4)  arc[radius=4, start angle=65, end angle=55];
       \node (d) at (5,-0.25) {$|x|=r$};
       \node (e) at (3,3) {$\Omega_g$};
       % \node (f) at (3.4,1.95) {$\delta\to 0$};
       % \node (g) at (5.4,2.2) {$ \color{blue} \partial \operatorname{supp} \nabla\omega_g^\delta$};
    \draw[blue,dotted] (james) -- (0,0);
    \draw[blue,dotted] (clair) -- (0,0);
% \curlybrace[tip angle=-2,thick,color=blue]{80}{10}{5.2};
\curlybrace[tip angle=-2,thick,color=blue]{65}{25}{5.17};
\curlybrace[tip angle=-2,dotted,color=blue]{25}{10}{5.17};
\curlybrace[tip angle=-2,dotted,color=blue]{80}{65}{5.17};
\end{tikzpicture}
\caption{A schematic picture of approximation as $\delta\to0^{+}$} 
\end{center}
 \label{fig_approx}
\end{figure}
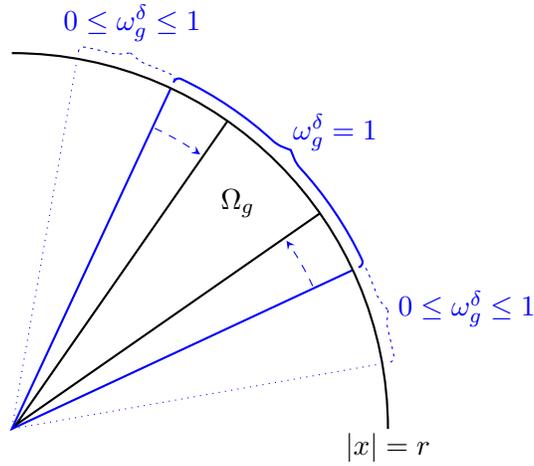

Recalling the definitions 
\begin{equation*}
    J_{1,c}=- \frac{2}{|B_r|}\int_{B_r\cap \partial A^\dlt} \omg_g^\dlt\omg^\veps\ou^{\veps,\dlt}\cdot n\,dS, \quad \mbox{and}\quad K_{1,c}=\frac{2}{|B_r|}\int_{B_r\cap \partial A^{\dlt}}h^{\dlt}\omg_g^{\dlt}\omg^{\veps}u_g^{\dlt}\cdot n \,dS,
\end{equation*}
we observe that due to the presence of $\omg_g^{\dlt}$ in both $J_{1,c}$ and $K_{1,c}$, the contribution coming from the outer part of $\partial A^{\dlt}$ among the two connected components disappears since $\omg_g^{\dlt}=0$ there. It suffices to check what occurs on the inner boundary during the limit $\dlt\to0^{+}$. We claim that
\begin{equation*}
\begin{split}
    \lim_{\dlt\to 0^{+}}\int_{0}^{t} (J_{1,c}+K_{1,c})\,dt' &= \int_{0}^{t} \lim_{\dlt\to 0^{+}}(J_{1,c}+K_{1,c})\,dt' \\
    &=\int_{0}^{t} \frac{1}{|B_{\Phi_{t'}(r)}|}\int_{B_{\Phi_{t'}(r)} \cap \partial \Omg_g}\left(\omg^{\veps}(u^{\veps}-u_g)-h\omg^{\veps}u_g\right)\cdot n\,dS,
\end{split}
\end{equation*}
where the first equality will be justified shortly after by the dominated convergence theorem and the second inequality will be the mere computation of their limits.

Consider a part of the initial boundary near the origin denoting a line segment: $$S=\{(s,c_1s)\in \bbR^2 : 0\leq s \leq c\}\subset \partial\Omg(0).$$ We can parametrize $S$ by $0\leq s\leq c$. The time evolution of $S$ can be expressed via $\phi_{g}^{\dlt}$ as
\begin{equation*}
   \phi_{g}^{\dlt}(S):= \{\phi_{g}^{\dlt}(t,s,c_1s):0\leq s \leq c\},
\end{equation*}
where $\phi_{g}^{\dlt}$ is defined by the ODE
\begin{equation*}
    \partial_t\phi_{g}^{\dlt}(t,x)=((1+h^{\dlt})u_g^{\dlt})(t,\phi_{g}^{\dlt}(t,x))
\end{equation*}
supplemented with the initial data $\phi_{g}^\dlt(0,x)=x$ for each $x\in \bbR^2$. Now we parametrize $\phi_{g}^{\dlt}(S)$ through the map $\gamma_{t,\dlt}:[0,c]\to \bbR^2$ defined by
\begin{equation*}
    \gamma_{t,\dlt} (s) = \phi_{g}^{\dlt}(t,s,c_1s).
\end{equation*}
Then we get, for instance,
\begin{equation*}
    \begin{split}
        J_{1,c}&= - \frac{2}{|B_{\Phi_{t'}(r)}|}\int_{B_{\Phi_{t'}(r)}\cap \partial A^\dlt} \omg_g^\dlt \omg^{\veps}\ou^{\veps,\dlt}\cdot n\,dS \\
        &=- \frac{2}{|B_{\Phi_{t'}(r)}|}\int_{0}^{c} \left(\omg_g^{\dlt}\omg^{\veps}\ou^{\veps,\dlt}\cdot n\right)(\gamma_{t',\dlt}(s))|\partial_s\gamma_{t',\dlt}(s)|\,ds,
    \end{split}
\end{equation*}
for some $c$ chosen to ensure $|\gamma_{t',\dlt}(c)|=\Phi_{t'}(r)$. The standard Yudovich estimate and the pointwise bound $|\nb u_g^{\dlt}(x)|\leq C|\ln|x||$ with $C>0$ independent of $\dlt>0$ give us that
\begin{equation*}
\begin{split}
\left(\omg_g^{\dlt}\omg^{\veps}\ou^{\veps,\dlt}\cdot n\right)(\gamma_{t,\dlt}(s))|\gamma_{t,\dlt}'(s)| &\leq C|\gamma_{t,\dlt}(s)|\big|\ln |\gamma_{t,\dlt}(s)|\big||\nb\phi_g^{\dlt}(\gamma_{t,\dlt}(s))| \\
    &\leq Cs^{1-2ct}|\ln s|e^{C|\ln s|t} \\
    &\leq Cs^{\frac{1}{2}}.
\end{split}
\end{equation*}
Therefore, the integrand is uniformly bounded in both $0<\dlt<1$ and $t'\in [0,t]$ with $t\le t_0$ for some sufficiently small absolute constant $t_0>0$ . We adopt the dominated convergence theorem to yield
\begin{equation*}
\begin{split}
    \lim_{\dlt\to0^{+}}\int_{0}^{t}J_{1,c}\,dt' &=- \int_{0}^{t}\frac{1}{|B_{\Phi_{t'}(r)}|}\int_{0}^{c}\left(\omg^{\veps}(u^{\veps}-u_g)\cdot n\right)(\gamma_t(s))|\partial_s \gamma_t(s)|\,ds\,dt' \\
    &= \int_{0}^{t} \frac{1}{|B_{\Phi_{t'}(r)}|}\int_{B_{\Phi_{t'}(r)}\cap \partial \Omg_g} \omg^{\veps}(u^\veps-u_g)\cdot n \,dS,
\end{split}
\end{equation*}
where $\gamma_t(s):=\gamma_{t,0}(s)=\phi_g(t,s,c_1s)$ and $\phi_g$ is the flow map associated with $(1+h)u_g$ so that $\gamma_t(s)$ represents the corresponding part of $\partial \Omg_g$ near the origin with the parameter $s$. The sign change in the second equality is due to the outward-pointing nature of $n$ in relation to the integral domain. Similarly, we can prove the claimed convergence for $K_{1,c}$ as $\dlt\to 0^{+}$. Once we complete taking the limit $\dlt\to 0^{+}$, we also send $\veps$ to zero from the right to obtain
\begin{equation*}
    F(t,\Phi_{t}(r)) \leq F(0,r)+ C\int_{0}^{t}\left(1+\int_{\Phi_{t'}(r)}^{1}\frac{F(t',s)+F(t',r)}{s}\,ds + \frac{1}{|B_{\Phi_{t'}(r)}|}\int_{B_{\Phi_{t'}(r)}\cap \partial \Omg_g}\big(|\ou|+|h||u_g|\big)\,dS\right)\,dt'.
\end{equation*}
To further estimate the last integral on the right-hand side,  we  parametrize $B_{\Phi_{t'}(r)} \cap \partial \Omg_g$ with the radial parameter $r'=|x|.$ The explicit parametrization is $$x=(x_1(r'),x_2(r'))=(r'\cos((A+(-1)^{j}B)(t|\ln r'|)), r'\sin((A+(-1)^{j}B)(t|\ln r'|)))$$ with $|x|=r'$ for $j=0,1$. Then we can see that the spatial integral involving $\ou$ is bounded by
\begin{equation*}
    \frac{2}{|B_{\Phi_{t'}(r)}|}\sum_{j=0,1}\left(\int_{0}^{\Phi_{t'}(r)}|\ou(r'\cos(A+(-1)^{j}B),r'\sin(A+(-1)^{j}B))|\sqrt{1+(t')^2|A'+(-1)^{j}B'|^2}\,dr'  \right),
\end{equation*}
where the dependence of $A$ and $B$ on $t'|\ln r'|$ is suppressed. This lets us arrive at
\begin{equation}\label{est_main_F}
    \begin{split}
       \frac{1}{|B_{\Phi_{t'}(r)}|}&\int_{\partial\Omega_g\cap B_{\Phi_{t'}(r)}}|\ou|\, dS \\
        &\leq \frac{C}{\Phi_{t'}(r)^2} \int_{0}^{\Phi_{t'}(r)} r'\int_{r'}^{1}\frac{F(s)}{s}\,ds \,dr' + C \\
        &= \frac{C}{ \Phi_{t'}(r)^2} \left( - \int_{0}^{\Phi_{t'}(r)} \frac{(r')^2}{2} \cdot \frac{-F(r')}{r'}\,dr' + \left[\frac{(r')^2}{2}\cdot \int_{r'}^{1}\frac{F(s)}{s}\,ds\right]_{r'=0}^{r'=\Phi_{t'}(r)}\right) + C \\
        &\leq C\int_{\Phi_{t'}(r)}^{1} \frac{F(s)}{s}\,ds + C.
    \end{split}
\end{equation}
 In the above calculation, we invoke the bound of $\ou$ similarly to the estimate of $I$ thanks to Lemma~\ref{lem_ou} and the uniform boundedness of $(t')^2(|A'|+|B'|)$ (see Proposition~\ref{prop_A'} and Corollary~\ref{cor_B'}) for the first inequality, and perform integration by parts for the equality in-between. The last inequality is just because $F\leq 1$. 
Lastly, the spatial integral involving $|h||u_g|$ is bounded by
\begin{equation*}
    \begin{split}
       \frac{Ct'}{\Phi_{t'}(r)^2}\int_{0}^{\Phi_{t'}(r)}\frac{r'}{|\ln r'|}&\left(\int_{r'}^{1}\frac{B(t'|\ln s|)}{s}\,ds\right)^2\,dr'\\  &\leq  \frac{Ct'}{\Phi_{t'}(r)^2}\int_{0}^{\Phi_{t'}(r)}r'|\ln r'|\,dr' \\
        &=  \frac{Ct'}{\Phi_{t'}(r)^2}\left(-\int_{0}^{\Phi_{t'}(r)}\frac{{r'}^2}{2}\left(-\frac{1}{r'}\right)\,dr' + \frac{\Phi_{t'}(r)^2}{2}|\ln \Phi_{t'}(r)|\right) \\ &\leq Ct'|\ln \Phi_{t'}(r)|,
    \end{split}
\end{equation*}
% \color{red} (taking time integration of the left-hand side gives a better decay $$\lesssim \frac{\ln(t|\ln r|)}{|\ln r|}$$ which goes to zero as $r\to 0$ for any time $t>0$)\color{black}
which can be verified precisely as done in the previous estimate with the aid of the estimate
\begin{equation*}
    h(t',x) \leq \frac{Ct'}{|\ln |x||}\int^{1}_{|x|}\frac{B(t'|\ln s|)}{s}\,ds
\end{equation*}
and the fact that $B(t'|\ln s|)\leq B_0<\pi/4$. Thus we reach
\begin{equation}
    \frac{1}{|B_{\Phi_{t'}(r)}|}\int_{B_{\Phi_{t'}(r)}\cap \partial \Omg_g}\big(|\ou|+|h||u_g|\big)\,dS \leq C+C\int_{\Phi_{t'}(r)}^{1} \frac{F(s)}{s}\,ds + Ct'|\ln\Phi_{t'}(r)|. 
\end{equation}
Collecting all of the above estimates, we finally establish that
\begin{equation*}
    F(t,\Phi_{t}(r)) \leq F(0,r)+Ct+C\int_{0}^{t}t'|\ln \Phi_{t'}(r)|\,dt'+ C\int_{0}^{t}\int_{\Phi_{t'}(r)}^{1}\frac{F(t',s)+F(t',r)}{s}\,ds\,dt'
\end{equation*}
as desired. 
\end{proof}

\subsection{Proof of Theorem~\ref{thm_main_arbitrary}}\label{sec_main}
\begin{proof}[Proof of Theorem~\ref{thm_main_arbitrary}]
We first treat the case $F(0,r)\equiv 0$ which represents the case of exact corners, and then prove Theorem~\ref{thm_main} in its full generality from which Theorem~\ref{thm_main_arbitrary} immediately follows.  

\noindent \textbf{Step 1. The simplest case of exact corners.}  From Proposition~\ref{prop_F}, we deduce
\begin{equation} 
    F(t,\Phi_t(r')) \leq Ct + C\int_{0}^{t}t'|\ln\Phi_{t'}(r')|\,dt' +C\int_{0}^{t}|\ln\Phi_{t'}(r')|\sup_{s\in[\Phi_{t'}(r'),1]}F(t',s)\,dt'.
\end{equation}
Writing $r' =\Phi_{t}^{-1}(r)$, we see that $\Phi_{t'}(r')=\Phi_{t'-t}(r) \geq r$ for all $t'\in[0,t]$ by \eqref{est_Yudovich}, which gives
\begin{equation}\label{eq_cusp_basic_2}
    F(t,r) \leq Ct + C\frac{t^2}{2}|\ln r| + C\int_{0}^{t}|\ln r| \sup_{s\in [r,1]}F(t',s)\,dt'
\end{equation}
The trivial fact $F\leq 1$ further leads to
\begin{equation}\label{ineq_cusp_result_1}
    F(t,r) \leq  F^{(0)}(t,r)
\end{equation}
where $F^{(0)}=F^{(0)}(t,r)$ is defined by
\begin{equation}\label{def_F0}
    F^{(0)}(t,r)= Ct + C\frac{t^2}{2}|\ln r|+Ct|\ln r|.
\end{equation} Here, the superscript $(0)$ denotes the number of the completed iterations whose procedure will be described below. A useful observation is that the upper bound $F^{(0)}(t,r)$ in \eqref{def_F0} is monotonically decreasing in $0<r\leq c_1$ (so it increases as $r\to 0$). This tells us that
 \begin{equation}\label{F0_prop_1}
 \begin{split}
        \sup_{s\in[r,1] } F^{(0)}(t,s)= F^{(0)}(t,r)
\end{split}
\end{equation}
Thanks to \eqref{F0_prop_1}, putting \eqref{ineq_cusp_result_1} back into \eqref{eq_cusp_basic_2} allows for the first \emph{iterated estimate} 
\begin{equation*}
    F(t,r) \leq Ct +C\frac{t^2}{2}|\ln r|+ C|\ln r|\int_{0}^{t}F^{(0)}(t',r)\,dt' =: F^{(1)}(t,r).
\end{equation*}
One may notice that such a process can be repeated in the exactly same way; for each integer $m\geq 1$, due to the recursive structure of \eqref{eq_cusp_basic_2} combined with the monotonicity property that corresponds to \eqref{F0_prop_1}, we obtain the $(m-1)$th iterated estimate
\begin{equation}\label{est_iterated}
   F(t,r) \leq Ct +C\frac{t^2}{2}|\ln r|+ C|\ln r|\int_{0}^{t}F^{(m-2)}(t',r)\,dt'=: F^{(m-1)}(t,r).
\end{equation}
Starting from \eqref{def_F0} within the recursive structure \eqref{est_iterated}, we obtain the explicit bound as
\begin{equation*}
    \begin{split}
 F(t,r) &\leq  \frac{1}{|\ln r|}\sum_{k=1}^{m} \frac{(Ct|\ln r|)^{k}}{k!} + \frac{1}{C|\ln r|}\sum_{k=1}^{m} \frac{(Ct|\ln r|)^{k+1}}{(k+1)!} +\frac{\left(Ct|\ln r|\right)^{m}}{m!} \\
    &\leq \frac{2}{|\ln r|}e^{Ct|\ln r|}+\frac{\left(Ct|\ln r|\right)^{m}}{m!}
\end{split}
\end{equation*}
where we use the Taylor expansion of $\operatorname{exp}(Ct|\ln r|).$ The Stirling formula estimate $m! \geq c_0 \left(\frac{m}{e}\right)^m$ further yields
\begin{equation}\label{F_bound_zero}
    \begin{split}
        F(t,r) \leq \frac{2}{|\ln r|}e^{Ct|\ln r|}+\frac{1}{c_0}\left(\frac{Ct|\ln r|}{m}\right)^{m},
    \end{split}
\end{equation}
which will be used below in deriving the desired conclusion.

\noindent \textbf{Step 2. The general case of arbitrary corner structure.} Now we treat the general case when $F(0,r) \leq \kappa(r)$ where $\kappa=\kappa(r)$ is given in \eqref{eq_initial_2}. Here, the only difference is that we need to estimate the \emph{descendants of} $\kappa(r)$, which newly arise during the iterations unlike the case $F(0,r)\equiv 0$. Define four operators $L$, $H$, $\tilde{L}$, and $\tilde{H}$ by
 \begin{equation}\label{def_LH}
 \begin{split}
          L(f)(t',r) &= \frac{1}{|\ln r|}\int_{r}^{1}\frac{f(t',s)}{s}\,ds,\quad H(f)(t',r) = \sup_{s\in[0,t]}f(t',\Phi_{s-t}(r)), \\
     &\tilde{L}(f)(t',r) = |\ln r| L(f)(t',r), \quad \mbox{and}\quad \tilde{H}(f)(t',r)=|\ln r| H(f)(t',r)
 \end{split}
 \end{equation}
for any function $f=f(t',r)$ that is continuous in $t'$ and $r$ with $t',r>0$. Once we recursively set
\begin{align*}
    F_2^{(m-1)}(t,r) &= \kappa(r) + C\int_{0}^{t} |\ln \Phi_{t'-t}(r)|\left(L(F_2^{(m-2)})(t',\Phi_{t'-t}(r)) + H(F_2^{(m-2)})(t',r)\right)\,dt', \\
    F_2^{(0)}(t,r) &= \kappa(r),
\end{align*}
we can establish the full bound
\begin{equation*}
    F(t,r) \leq F_1^{(m-1)}(t,r) + F_2^{(m-1)}(t,r), \quad \forall m\in \bbN
\end{equation*}
where $F_1^{(m-1)}$ is set by the previous iteration \eqref{est_iterated} together with  \eqref{def_F0}. Since we already know that $F_1^{(m-1)}$ enjoys the bound \eqref{F_bound_zero}, it suffices to estimate $F_2^{(m-1)}$. We first observe that, using $\Phi_{s-t}(r)\geq r$ for $s\in[0,t]$ in \eqref{est_Yudovich} and the definitions in \eqref{def_LH}, there holds
\begin{equation*}
    F_2^{(m-1)}(t,r) \leq \kappa(r) + C\int_{0}^{t} \left(\tilde{L}(F_2^{(m-2)})(t',r) + \tilde{H}(F_2^{(m-2)})(t',r)\right)\,dt'.
\end{equation*}
This provides us with
\begin{equation*}
\begin{split}
       F_2^{(m-1)}(t,r) &\leq \sum_{k=0}^{m-1} \frac{t^k|\ln r|^k}{k!}\cdot \frac{(\tilde{L}+\tilde{H})^{k}(\kappa)(r)}{|\ln r|^k},
\end{split}
\end{equation*}
thanks to the choice of $F_2^{(0)}(t,r)=\kappa(r)$. Moreover, since $\tilde{H}(\tilde{L}(\kappa))(r) \leq |\ln r|\tilde{L}(\kappa)(r)$ by \eqref{est_Yudovich} again, any term in the expansion of $(\tilde{L}+\tilde{H})^k$ can be controlled by a term of the form $|\ln r|^{k-i-j}\tilde{L}^i(\tilde{H}^j)$ with $i+j\leq k$
 for which one can prove the following. There holds
\begin{equation*}
     \frac{|\ln r|^{k-i-j}\tilde{L}^i(\tilde{H}^j(\kappa))(r)}{|\ln r|^k} \leq  \kappa(\dlt^{e^{-jct}}) + \frac{|\ln \dlt|}{|\ln r|}, \quad i+j\leq k,
\end{equation*}
so long as $0<r<\dlt$. Note that, in the following, we will have that $0<kt\ll 1.$ Using this bound along with our choice of $\dlt= \frac{1}{|\ln r|}$, we arrive at
\begin{equation}\label{F_2_bound}
    F_2^{(m-1)}(t,r) \leq \left(\kappa(|\ln r|^{-\frac{1}{2}})+\frac{\ln |\ln r|}{|\ln r|}\right) e^{Ct|\ln r|}.
\end{equation}

\noindent \textbf{Step 3. Choice of parameters.} \eqref{F_bound_zero} and \eqref{F_2_bound} give us the fully general bound
\begin{equation*}
    F(t,r) \leq \left(\frac{2}{|\ln r|}+\kappa(|\ln r|^{-\frac{1}{2}})+\frac{\ln |\ln r|}{|\ln r|}\right) e^{Ct|\ln r|}+ \frac{1}{c_0}\left(\frac{Ct|\ln r|}{m}\right)^{m}
\end{equation*}
for any $m\in \bbN$. Let $\xi:(0,c_1]\to \bbR_{+}$ satisfy
\begin{equation}\label{xi}
    \lim_{r\to 0^{+}} \xi(r) = \infty, \quad \mbox{and} \quad \lim_{r\to 0} \frac{\xi(r)}{|\ln r|} = 0.
\end{equation}
For any $0\leq t\leq \frac{\xi(r)}{C|\ln r|}$ with such $\xi$, one may have
\begin{equation}\label{eq_coffee}
    F(t,r) \leq   \left(\frac{2}{|\ln r|}+\kappa(|\ln r|^{-\frac{1}{2}})+\frac{\ln |\ln r|}{|\ln r|}\right) e^{\xi(r)} + \left(\frac{\xi(r)}{m}\right)^{m}.
\end{equation} 
Selecting large $m\in \bbZ $ that satisfies
\begin{equation}\label{est_m}
    e\xi(r) \leq m \leq  4\xi(r),
\end{equation}
the estimate \eqref{eq_coffee} can be followed by
\begin{equation}\label{ineq_final_form}
\begin{split}
    F(t,r) &\leq  \left(\frac{2}{|\ln r|}+\kappa(|\ln r|^{-\frac{1}{2}})+\frac{\ln |\ln r|}{|\ln r|}\right) e^{\xi(r)} + e^{-\xi(r)}.
\end{split}
\end{equation}
Note that \eqref{est_m} is valid whenever $r$ is small enough that $\xi(r)$ is greater than some absolute constant. It is left to specify $\xi$ to draw a conclusion. Pick $\xi$ to satisfy \eqref{xi} as, for example,
\begin{equation*}
    \xi(r) = \frac{1}{2}\min\left\{\ln |\ln r|, \ln\left(\frac{1}{\kappa\left(|\ln r|^{-1/2}\right)}\right) \right\}.
\end{equation*}
Then \eqref{ineq_final_form} enables us to estimate $F$ as its updated form
\begin{equation}\label{decay_F}
    F(t,r) \leq \frac{2\ln |\ln r|}{|\ln r|^{\frac{1}{2}}}+\kappa^{\frac{1}{2}}\left(|\ln r|^{-1/2}\right), 
\end{equation}
which goes to zero as $r\to 0^{+}$.

\noindent \textbf{Step 4. Conclusion for $G$.}
The result follows from the combination of Theorem~\ref{thm_B} and the estimate of $F$ obtained in \eqref{decay_F}. Observe first that
\begin{equation*}
    G(t,r) \leq F(t,r) + \frac{1}{|B_r|}\int_{0}^{r}s(4B(t|\ln s|))\,ds.
\end{equation*}
Define the time scale $\eta=\eta(r)$ by $\eta(r)= \frac{\xi(r)}{C|\ln r|}$.Using the bound $B(\tau)\lesssim (\tau)^{-1-\delta}$ for $\tau>T$ in Theorem~\ref{thm_B}, we have
\begin{equation*}
    \begin{split}
        \frac{1}{|B_r|}\int_{0}^{r}s(4B(\eta(r)|\ln s|))\,ds \lesssim \frac{1}{ r^2}\int_{0}^{r}\frac{s}{\left(\eta(r)|\ln s|\right)^{1+\delta}} \,ds \lesssim \frac{1}{(\ln |\ln r|)^{1+\delta}}
    \end{split}
\end{equation*}
for sufficiently small $r>0$. The estimate \eqref{decay_F} further yields
\begin{equation*}
    G(\eta(r),r) \lesssim \frac{\ln |\ln r|}{|\ln r|^{\frac{1}{2}}}+ \kappa^{\frac{1}{2}}\left(|\ln r|^{-1/2}\right) + \frac{1}{(\ln|\ln r|)^{1+\delta}},
\end{equation*}
which proves Theorem~\ref{thm_main} and so Theorem~\ref{thm_main_arbitrary} is established.
\end{proof}

\section{Some Questions}
While the main theorem establishes cusp formation in the sense mentioned in Theorem \ref{thm_main}, a number of interesting questions remain open, many of which could be attacked by refining some of the ideas presented above.

\subsection{Going to times of order 1}
In Theorem \ref{thm_main}, we prove cusp formation immediately, but we do not rule out that the corner could reconstitute itself immediately ``thereafter." It would be great to show that 
\[\lim_{r\rightarrow 0} G(t,r)=0,\] for all $t>0$, or for $t\in (0,\delta]$ for some fixed $\delta>0.$ This may require using the effective system in a more fundamental way. 

\subsection{Resolving the cusp and propagation of $C^1$ regularity}

It is reasonable to conjecture that the boundary of the patch near the origin is for all time given by the intersection of two $C^1$ curves (though, whose tangent vectors immediately become parallel at the origin). Proving this seems difficult, though it would imply that there is truly a cusp that forms for $t>0.$ A related question that we do not answer here is about the propagation of $C^1$ patches, though non-propagation of $C^2$ patches was proven in \cite{KL}. 

\subsection{Proving the angle jump}
As shown in \hyperref[fig_cusp]{Figure 5}, the ODE system for the size of the corner, $B(t)$, and its location, $A(t),$ gives that \[B(\tau)\rightarrow 0,\qquad A(\tau)\rightarrow A_\infty<0\] as $\tau\rightarrow\infty.$ In particular, the location of the corner/cusp jumps immediately at $r=0$ and $t>0$. This is also apparent from the numerics of \cite{Danchin2}. We can deduce this in an ``averaged sense," but it would be interesting to prove this in a pointwise sense. Note that $A_\infty<0$ indicates that the corner ``turns" in the opposite direction (clockwise) to the bulk.  

\subsection{Constructing patches with right angles}
As we have discussed, using odd symmetry (or working on domains\footnote{For example, it is not difficult to check that a half-disk patch set on the domain $B_1(0)$ rotates rigidly.}), it is easy to construct stationary or rotating patches with right corners. However, to our knowledge, it is not known whether there are rotating patches on $\mathbb{R}^2$ with right corners. These have been observed numerically in \cite{Carrillo}.

\section*{Acknowledgements}

The authors acknowledge partial support from NSF DMS-2043024. T.M.E. also acknowledges partial support from a Simons Fellowship and the Alfred P Sloan Foundation.

\section*{Competing Interests}
 The authors have no competing interests to declare that are relevant to the content of this article.

 \section*{Data Availability}
Data sharing not applicable to this article as no datasets were generated or analyzed during the current study.

%That the boundary remains locally $C^1$ on each side of the origin is not established here. Similarly, while the ODE system indicates that the 
%We prove cusp formation in an averaged sense, by using the averaged quantity $G(t,r)$. It would be great to prove that the boundary of the patch in a neighborhood of 0 is, for all time, the intersection of two $C^1$ curves with the same tangent vector at zero. It is not even clear if this is true.  

\bibliographystyle{amsplain}

\end{document}